\documentclass{amsart}
\usepackage[margin=2.63cm]{geometry}
\pdfoutput=1
%
%
%
%


\usepackage[utf8]{inputenc} 

\usepackage[T1]{fontenc}    

\usepackage{url}


\usepackage{amsmath}
\usepackage{amsfonts}
\usepackage{amssymb}
\usepackage{amsthm} %
\usepackage{mathrsfs} %
\usepackage{enumerate} 
\usepackage{stmaryrd}
\usepackage{dirtytalk}

\usepackage{tikz}
\usepackage{pgfplots}
\usetikzlibrary{calc}
\usetikzlibrary{shapes}

\usetikzlibrary{intersections}
\usetikzlibrary{patterns}
\usepackage{comment}
\usepackage[all]{xy}

\usepackage{graphicx}
\usepackage{caption}
\captionsetup{width=\textwidth}   

\definecolor{byzantine}{rgb}{0.74, 0.2, 0.64}
\definecolor{magenta}{rgb}{1.0, 0.0, 1.0}
\definecolor{islamicgreen}{rgb}{0.0, 0.56, 0.0}
\definecolor{ferrarired}{rgb}{1.0, 0.11, 0.0}

\definecolor{crimson}{rgb}{0.86, 0.08, 0.24}
\definecolor{applegreen}{rgb}{0.55, 0.71, 0.0}
\definecolor{ao}{rgb}{0.0, 0.5, 0.0}

\usepackage[hyperfootnotes,colorlinks=true,citecolor=cyan,backref=page]{hyperref}

\theoremstyle{plain} 
\newtheorem{thm}{Theorem}[section]

\newtheorem{prop}[thm]{Proposition}

\theoremstyle{definition}

\newtheorem*{question}{Question}
\newtheorem{ex}[thm]{Example}


\newcommand{\cone}{\operatorname{cone}}
\newcommand{\shi}{{\operatorname{Shi}}}


\author[N. Chapelier-Laget]{Nathan~Chapelier-Laget}
\address[Nathan Chapelier-Laget]{University of Sydney}
\email{nathan.chapelier@gmail.com}
\urladdr{https://www.nathanchapelier.fr/home}

\title{Low elements in dominant Shi regions}

\begin{document}
\maketitle

\begin{abstract}
This note is a complement of \cite[Section 6]{chaphohl2022shi}. We explain in terms of ad-nilpotent ideals of a Borel subalgebra why the minimal elements of dominant Shi regions are low.  We also give a survey of the bijections involved in the study of dominant Shi regions in affine Weyl groups. 
\end{abstract}

\section{Introduction}

 Let $(W,S)$ be Coxeter system with length function $\ell:W\to \mathbb N$.  Let $\Phi$ be a root system of $(W,S)$, with simple system $\Delta=\{\alpha_s\mid s\in S\}$ and positive root system $\Phi^+=\cone(\Delta)\cap \Phi$, where $\cone(X)$ is the set of nonnegative  linear combinations of vectors in $X$. The {\em inversion set of $w\in W$} is the set 
$
N(w)=\Phi^+\cap w(\Phi^-),
$
where $\Phi^-=-\Phi^+$; moreover $\ell(w)=|N(w)|$.

 In \cite{BrHo93} Brink and Howlett introduced the dominance order: this is the partial order $\preceq$ on $\Phi^+$ defined by
$$
\alpha \preceq \beta \Longleftrightarrow \forall w \in W, \beta \in N(w) \Longrightarrow \alpha \in N(w).
$$
For $\beta \in \Phi^+$, the \emph{dominance set} of $\beta$ is Dom($\beta$) := $\{ \alpha \in \Phi^+~|~\alpha \prec \beta \}$. The $\infty$-\emph{depth} on $\Phi^+$ is defined by dp$_{\infty}(\beta)$ = $|$Dom($\beta$)$|$. We say that $\beta$ is a \emph{small root} if dp$_{\infty}(\beta)$ = 0. The set of small roots is denoted $\Sigma$. 

 For $X \subset V$ we denote  $\text{cone}_{\Phi}(X) = \text{cone}(X) \cap \Phi$.  An important family of elements connected to $\Sigma$ is the set of low elements. Low elements are defined as follows: $w\in W$ is a {\em low element} if we have
$
 N(w)=\cone_{\Phi}(\Sigma\cap N(w)).
 $
 Denote by $L(W,S)$, or just $L$, the set of low elements of $(W,S)$.  A convenient characterization of being low is the following: $w\in W$ is low if there exists $X\subseteq \Sigma$ such that $N(w)=\cone_\Phi(X)$. 
 
 \subsection{Tits cone and Shi arrangement}

Let $X_S$ be the $ \mathbb{R}$-vector space with basis
$\{\alpha_s~|~s \in S\}$.  Let $B$ be the symmetric bilinear form on $X_S$ defined by 
$$
 B(e_s,e_t)=  \left\{
                          	\begin{array}{ll}
 						  -\text{cos}(\frac{\pi}{m_{st}})  & \text{if}~~ m_{st} < \infty \\
  				      	~~~~-1      & \text{if}~~m_{st} = \infty.
					    \end{array}
					    \right.
$$

Let  $O_B(X_S)$ to be the orthogonal group of $X_S$ associated to $B$. For each $s \in S$ we define $ \sigma_s : X_S \rightarrow X_S$ by $\sigma_s(x) = x - 2B(e_s,x)e_s$. The map $\sigma :W \hookrightarrow O_B(X_S)$ defined by $s \mapsto \sigma_s$ is called \emph{the geometrical representation} of $(W,S)$.  Let  $\sigma^*$ be the \emph{contragredient action} of $\sigma$.
 Let $\alpha \in \Phi$.  We denote 
$$
H_{\alpha} := \{ f \in X_S^*~| ~f(\alpha)  = 0 \}~\text{~and~}~R_{\alpha} := \{ f \in X_S^*~| ~f(\alpha)  > 0 \}.
$$

Let $C$ be the intersection of all $R_{\alpha}$ for $\alpha \in \Delta$ and let $D =\overline{C}$. The \emph{Tits cone} $ \mathcal{U}(W)$ of $W$ is defined by 
$$
~\mathcal{U}(W) := \bigcup\limits_{w \in W}wD.
$$ 

If $w \in W$, $wC$ is called a \emph{chamber} of $\mathcal{U}(W)$. The action of $W$ on $\{wC, w \in W \}$ is simply transitive and then the chambers of $\mathcal{U}(W)$  are in bijection with the elements of $W$. The chamber $C$ is called \emph{the fundamental chamber} of $W$ and it corresponds to the identity element of $W$.  The set $\{H_{\alpha}~|~\alpha \in \Phi\}$ is called the Coxeter arrangement of $(W,S)$.

The \emph{Shi arrangement} of $W$ is defined by $\shi(W) = \{ H_\alpha ~|~ \alpha\in \Sigma \}.$ The \emph{Shi regions} of $W$ are the connected components in the Tits cone of the Shi arrangement, that is  the connected components of 
$$
\mathcal{U}(W) ~\backslash \bigcup_{\alpha \in \Sigma} H_{\alpha}.
$$

 Let $\mathcal R$ be a Shi region.  We say that a  wall $H$ of $\mathcal R$ is a {\em descent-wall} of $\mathcal R$ if $H$ separates $\mathcal R$ from the fundamental chamber $C$.   The {\em set of descent-roots of a Shi region $\mathcal R$} is
$$
\Sigma D(\mathcal R):=\{\alpha\in \Sigma(\mathcal R)\mid H_{\alpha} \textrm{ is a descent-wall of } \mathcal R\}.
$$

The {\em right descent set  of $w\in W$} is the set:
$
D_R(w)=\{s\in S\mid \ell(ws)=\ell(w)-1\} = \{s\in S\mid w(\alpha_s)\in \Phi^-\}.
$
Let $s\in D_R(w)$, then there is a reduced word for $w$ ending with $s$, that is $w=us$ with $u\in W$ and $\ell(w)=\ell(u)+1$.  Then
$
N(u)=N(w)\setminus\{u(\alpha_s)\}. 
$
The {\em set of right descent roots of $w$}, denoted by $ND_R(w)$, is
$ND_R(w)= -w(\{\alpha_s\mid s\in D_R(w)\})$.
In other words, $\beta\in ND_R(w)$ if and only if  $H_{\beta}$  is a wall of $wC$ that separates $C$ from $wC$.

 \subsection{Shi arrangement in affine Weyl groups}\label{aff}
 
 Assume that $\Phi_0 \subset V$ is crystallographic where $V$ is a Euclidean space with inner product $\langle~,~\rangle$ and let $\Delta_0$ be a simple system.  Let $(W_0,S_0)$ be the corresponding Weyl group and $(W,S)$ the corresponding affine Weyl group.  The Coxeter arrangement of $W$ is given by $\{H_{\alpha,k}~|~\alpha \in \Phi_0^+,~k \in \mathbb{Z}\}$ where $H_{\alpha,k} = \{x \in V~|~\langle x,\alpha \rangle = k\}$.  The chambers of this arrangement are called alcoves and are in bijection with the elements of the group. The alcove corresponding to $w \in W$ is denoted by $A_w$.
 The fundamental chamber of $W_0$ is defined by 
 $$
 C_\circ := \{x \in V~|~\langle x,\alpha \rangle > 0, ~\forall \alpha \in \Delta_0\}.
 $$

In \cite{Shi87} Shi gives a parametrization of the alcoves $A_w$ of $(W,S)$ in terms of $\Phi_0^+$-tuples, denoted $(k(w,\alpha))_{\alpha \in \Phi_0^+}$, subject to certain conditions.  
In \cite{Shi88}, Shi uses the above parametrization in order to describe the Shi regions of $W$. Let $\overline{\mathscr S}$ be the set of $\Phi_0^+$-tuples over $\{-,0,+\}$; its elements are called {\em sign types}. For $w\in W$, the function $\zeta: W\to \overline{\mathscr S}$ is defined by
 $\zeta(w)=(X(w,\alpha))_{\alpha\in\Phi_0^+}$ where
\begin{equation}\label{eq:Xw}
 X(w,\alpha) = 
 \left\{
 \begin{array}{cc}
 + &\textrm{if } k(w,\alpha)>0\\
 0  &\textrm{if } k(w,\alpha)=0\\
 - &\textrm{if } k(w,\alpha)<0 .
 \end{array}
 \right.
\end{equation}

 The {\em sign type $X(\mathcal R):=(X(\mathcal R,\alpha))_{\alpha\in\Phi_0^+}$ of a Shi region $\mathcal R$} is  $X(\mathcal R)=\zeta(w)$ for some $w\in W$ such that  $A_w \subset \mathcal R$. A sign type $X=(X_\alpha)_{\alpha\in\Phi_0^+}$ is said to be {\em admissible} if $X$ is in the image of $\zeta$.   
  An easy exercise consists to show that $\text{Shi}(W) = \{H_{\alpha,k}~|~\alpha \in \Phi_0^+,~k =0,1\}$.  
  This arrangement was introduced in \cite{Shi86} in type $A$ and then in any type in \cite{Shi87}.  
 The connected components of this arrangement were then called admissible sign types but in our terminology those are the Shi regions of $W$.   A Shi region is called dominant if it is included in the fundamental chamber of $W_0$.  It is obvious that for a Shi region $\mathcal R$, $\mathcal R$ is dominant if and only if $X(\mathcal R,\alpha)\in \{+,0\}$ for all $\alpha\in\Phi_0^+$.

 Finally, Shi showed that any admissible sign type has a unique element of minimal length \cite{Shi88} and we call this element a minimal element.  We denote by $L_{\text{Shi}}(W)$, or just $L_{\text{Shi}}$, the set of minimal elements of $W$.  The minimal element of a dominant Shi region is called a dominant minimal element (see \cite{chaphohl2022shi} for detailed examples).

 The Weyl group $W_0$ is a standard parabolic subgroup of $W$, since $W_0$ is generated by $S_0\subseteq S$. The set of minimal length coset representatives  is ~${}^0W  := \{v\in W\mid \ell(sv)>\ell(v),  \ \forall s\in S_0\} $. The following well-known proposition states that an element is a minimal coset representative for $W_0\backslash W$ if and only if its corresponding alcove is in the dominant region. 

 \begin{prop}\label{prop:DomCR} Let $w\in W$. We have $w \in {}^0W$ if and only if $A_w \subseteq C_\circ$.
 \end{prop}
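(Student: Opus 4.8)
The plan is to translate membership in ${}^0W$ into the language of left descents and then to read off the position of the alcove $A_w$ from the standard dictionary between length and alcove geometry. By definition $w\in{}^0W$ means $\ell(sw)>\ell(w)$ for every $s\in S_0$; that is, no $s\in S_0$ is a left descent of $w$. Since $S_0\subseteq S$, each such $s$ is a simple reflection of the affine Coxeter system $(W,S)$, and the hyperplane it fixes is $H_{\alpha_s,0}=H_{\alpha_s}$, one of the hyperplanes of the Coxeter arrangement of $W$ passing through the origin.

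The second step uses the standard correspondence between length and alcoves, with the convention $A_w=wA_e$ where $A_e$ is the fundamental alcove. Left multiplication by a simple reflection $s$ sends $A_w$ to $A_{sw}=sA_w$, the reflection of $A_w$ across $H_{\alpha_s}$, and since $\ell(\cdot)$ counts the hyperplanes of the affine arrangement separating an alcove from $A_e$, one has $\ell(sw)=\ell(w)+1$ exactly when $A_w$ lies on the same side of $H_{\alpha_s}$ as $A_e$, and $\ell(sw)=\ell(w)-1$ otherwise. As $A_w$ is a connected component of the complement of the arrangement, it is disjoint from $H_{\alpha_s}$ and hence lies entirely on one side, so this condition is unambiguous. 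Because $A_e\subseteq C_\circ$, the fundamental alcove lies on the positive side $\{x\mid\langle x,\alpha_s\rangle>0\}$ of every $H_{\alpha_s}$ with $s\in S_0$; thus $\ell(sw)>\ell(w)$ is equivalent to $\langle x,\alpha_s\rangle>0$ for all $x\in A_w$.

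Combining the two steps, $w\in{}^0W$ holds if and only if $\langle x,\alpha_s\rangle>0$ for every $s\in S_0$ and every $x\in A_w$, equivalently $\langle x,\alpha\rangle>0$ for all $\alpha\in\Delta_0$ and all $x\in A_w$. By the definition $C_\circ=\{x\in V\mid\langle x,\alpha\rangle>0,\ \forall\alpha\in\Delta_0\}$ this says precisely that $A_w\subseteq C_\circ$, giving both implications at once. The step that needs care is the length/side dictionary: one must fix the action convention ($A_w=wA_e$) and check the sign, namely that reflecting $A_w$ across $H_{\alpha_s}$ toward $A_e$ decreases length. I would pin this down on the rank-one system $\widetilde A_1$, where $A_e=(0,1)$, $C_\circ=(0,\infty)$, and the reflection in $H_{\alpha}=\{0\}$ visibly confirms both the sign and that $A_w=wA_e$ (rather than $w^{-1}A_e$) is the convention for which the equivalence holds.
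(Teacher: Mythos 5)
Your proof is correct. There is nothing in the paper to compare it against: the proposition is stated there as a well-known fact with no proof given, so your argument simply supplies the standard one.

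For the record, the two ingredients you use are exactly the right ones: that $\ell(w)$ equals the number of hyperplanes of the affine Coxeter arrangement separating $A_w$ from the fundamental alcove $A_e$, hence that $s\in S_0$ is a left descent of $w$ precisely when its wall $H_{\alpha_s}=H_{\alpha_s,0}$ separates $A_w$ from $A_e$; and that $A_e\subseteq C_\circ$, which fixes the sign so that the absence of left descents in $S_0$ translates into $\langle x,\alpha_s\rangle>0$ on all of $A_w$ for every $s\in S_0$, i.e. $A_w\subseteq C_\circ$. Your attention to the action convention ($A_{sw}=sA_w$ under $A_w=wA_e$, with the rank-one sanity check in $\widetilde A_1$) addresses the only place where such an argument commonly goes wrong, namely confusing the left-descent condition relevant to the coset space $W_0\backslash W$ with the right-descent condition, which would correspond to walls of $A_w$ rather than walls through the origin.
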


 \subsection{The two goals of this note}

 In \cite{chaphohl2022shi} we show with C. Hohlweg that the set of low elements of an affine Weyl group $W$ is equal to the set of minimal elements of $W$, solving a conjecture of Dyer and Hohlweg \cite[Conjecture 2]{DyHo16}.  Recently this conjecture has been solved for any Coxeter group \cite{dyer2023shi}. 
  To prove this conjecture in the affine setting we used the work of Cellini and Papi in their study of ad-nilpotent ideals of Borel subalgebras. Using their results, that we expressed in terms of root ideals rather than in terms of ad-nilpotent ideals, we first showed that any minimal element of a dominant Shi region is a low element. Then, we used an adequate parabolic decomposition to extend this result to other Shi regions.
  
  The goal of this note is to give a survey of the bijections involved in the study of dominant Shi regions and to give,  in terms of ad-nilpotent ideals, a detailed proof of the fact that dominant minimal elements are low elements. 
  
 We also ask the following question:
 
 \begin{question}
 Can we extend the notion of ad-nilpotent ideals so that the bijection between ad-nilpotent ideals and dominant Shi regions extends to the non-dominant Shi regions ?
 \end{question}

\medskip

\newpage

  \section{Survey on Shi regions, root ideals, antichains and ad-nilpotent ideals}\label{background ad-ideal}
  
  We are in the same setting as Section \ref{aff}.  Set $V=V_0\oplus \mathbb R \delta$ to be a real vector space with basis $\Delta_0\sqcup \{\delta\}$, where $\delta$ is an indeterminate.  We extend the inner product $\langle~,~\rangle$ into a symmetric bilinear form on $V$, that we also denote $\langle~,~\rangle$,  by $\langle x, \delta \rangle = 0$ for any $x \in V_0$ and $\langle \delta, \delta \rangle = 0$. 
The pair $(V,\langle~,~\rangle)$ is a quadratic space for which  the isotropic cone  is  $\mathbb R\delta= \{x\in V\mid \langle x,x\rangle=0\}$. 

A simple system in $V$ for $(W,S)$ is
$
\Delta = \Delta_0\sqcup \{\delta - \alpha_0\}.
$
The  root system $\Phi$ and the positive root system $\Phi^+$ for $(W,S)$ in $V$ are defined by $\Phi=\Phi^+\sqcup \Phi^-$, where $\Phi^-=-\Phi^+$ and
$\Phi^+ =( \Phi_0^+ +  \mathbb N\delta ) \sqcup ( \Phi_0^-+ \mathbb N^*\delta)$. 
 
 \subsection{background on Shi regions, root ideals and antichains} Recall that $\Psi\subseteq \Phi_0^+$ is a \emph{root ideal} of the poset  $(\Phi_0^+,\preceq)$ if for all $\alpha\in\Psi$ and $\gamma \in \Phi_0^+$ such that $\alpha\preceq \gamma$ we have $\gamma\in \Psi$.   By definition of the root poset, this is equivalent to: $\Psi\subseteq \Phi_0^+$ is a root ideal of $(\Phi_0^+,\preceq)$ if for all $\alpha\in\Psi$ and $\beta \in \Phi_0^+$ such that $\alpha+\beta\in \Phi_0^+$,  one has $\alpha+\beta\in \Psi$.   
 
 Denote by $\mathcal I(\Phi_0)$ the set of root ideals of $(\Phi_0^+,\preceq)$.  Let $Q$ be the root lattice of $\Phi_0$ and $Q^{\vee}$ be the coroot lattice.  In this article the affine Weyl group is $W=Q^{\vee} \rtimes W_0$ while in Shi's articles the affine Weyl group considered is $ W^{\vee} := Q \rtimes W_0$ even though Shi denotes it by $W$.
 
 \bigskip

$\bullet$ As explained in the introduction,  Shi shows in \cite{Shi88} that any Shi region $X$ has a unique element $m_X \in X$ of minimal length, characterized by a property of minimality of its Shi coefficients.  It follows the bijection:
$$
\begin{array}{ccc}
\{\text{Shi regions of}~ W\} & \longrightarrow & \{\text{minimal elements of}~W\}\\
                            X   				  & \longmapsto  & m_X.
\end{array}
$$
In particular this bijection restricts to the bijection:
$$
\begin{array}{ccc}
\{\text{dominant Shi regions of}~ W\} & \longrightarrow & \{\text{dominant minimal elements of}~W\}\\
                            X   				  & \longmapsto  & m_X.
\end{array}
$$
 
 \bigskip
 
$\bullet$ In \cite[Theorem 1.4]{Shi97}, Shi shows that there is a bijection between the set of root ideals of $(\Phi_0^+)^{\vee}$ (called \emph{increasing subsets}, and sometimes \emph{dual root ideals} in other references) and the set of dominant Shi regions of the affine Weyl group $W^{\vee}$ (called $\oplus$-sign types indexed by $\Phi_0^+$, or $\oplus$-sign types of $W^{\vee}$).  His proof his based on a characterization of $\oplus$-sign types  \cite[Proposition 1.2]{Shi97},  which is proven via a reduction to irreducible subroot systems of rank 2.  

Let $\Omega \in \{\Phi_0, \Phi_0^{\vee}\}$. For a root ideal $\Psi \in \mathcal{I}(\Omega)$ let us set $X_{\Psi} = (X(\Psi,\beta))_{\beta \in \Omega^+}$ where $\Omega^+ = \Phi_0^+$ if $\Omega = \Phi_0^{\vee}$,  $\Omega^+ = (\Phi_0^{\vee})^+$ if $\Omega = \Phi_0$ and 
$$
X(\Psi,\beta) = \left\{
\begin{array}{cc}
+&\textrm{if } \beta^{\vee} \in \Psi\\
0&\textrm{otherwise}.
\end{array}
\right. 
$$
Shi's bijection is then given by:
$$
\begin{array}{ccc}
\{\text{root ideals of}~ ((\Phi_0^{\vee})^+,\preceq)\} & \longmapsto & \{ \oplus\text{~-~sign types of }~W^{\vee}\}\\
                            \Psi       				  & \longmapsto  & X_{\Psi}.
\end{array}
$$
Then, by dualising this map we recover the bijection involving the affine Weyl group considered in this article, namely
$$
\begin{array}{ccc}
\{\text{root ideals of}~ (\Phi_0^+,\preceq)\} & \longrightarrow & \{ \oplus\text{~-~sign types of }~W\}\\
                            \Psi       				  & \longmapsto  & X_{\Psi}.
\end{array}
$$

Moreover, Shi provides a case by case enumeration of dominant Shi regions (i.e.,  $\oplus$-sign types).  To do so, he enumerates for each type $X$ of $\Phi_0^+$ all the possible root ideals of $(\Phi_0^+,\preceq)$ and expresses their corresponding dominant Shi regions.  Then,  after this enumeration he defines for any type $X$ the number $\mu(X)$ as the number of dominant Shi regions of type $X$ and gives explicit formulas of these numbers in types $A,B,C,D$: 

\begin{thm}[{\cite[Theorem 3.2]{Shi97}}]\label{number dom Shi regions}
$$
\mu(X) = \left\{
\begin{array}{cc}
\displaystyle\frac{1}{n+2}\binom{2n+2}{n+1}\text{~~}\text{~~}\textrm{if } X = A_n,\\
\displaystyle\binom{2n}{n} ~~\text{~~}\text{~~}\textrm{if } X = B_n, ~C_n,\\
 \displaystyle\binom{2n-1}{n}+\binom{2n-2}{n}  ~~\text{~~} \textrm{if } X = D_n.\\
\end{array}
\right. 
$$
\end{thm}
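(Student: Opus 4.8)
The plan is to translate the statement into a purely order-theoretic count and then carry it out type by type. By the chain of bijections recalled above, the dominant Shi regions of $W$ are in bijection with the $\oplus$-sign types of $W$, which by (the dual of) Shi's bijection $\Psi\mapsto X_\Psi$ correspond to the root ideals of the poset $(\Phi_0^+,\preceq)$. Hence $\mu(X)=|\mathcal I(\Phi_0)|$, the number of root ideals of the root poset of the given type. Since a root ideal is by definition an up-set, it is determined uniquely by its set of minimal elements, which is an antichain, and conversely every antichain generates a root ideal; so the problem reduces to counting antichains of $(\Phi_0^+,\preceq)$. Thus I would first fix the standard realizations of $\Phi_0^+$ and describe the order $\preceq$ combinatorially in each classical type.

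In type $A_n$ one has $\Phi_0^+=\{e_i-e_j\mid 1\le i<j\le n+1\}$, with $e_i-e_j\preceq e_{i'}-e_{j'}$ iff $i'\le i$ and $j\le j'$; thus the root poset is the poset of intervals $[i,j]$ ordered by inclusion, and an antichain is a family of pairwise non-nested intervals, i.e.\ a nonnesting partition of $[n+1]$. Encoding such a family by the monotone staircase separating the occupied from the unoccupied positions gives a bijection with Dyck paths of semilength $n+1$, whence $\mu(A_n)=\frac{1}{n+2}\binom{2n+2}{n+1}$. For types $B_n$ and $C_n$ I would first observe that, although the two root systems are dual, their root posets are isomorphic (the order does not record the long/short distinction, as one checks directly on the height-graded Hasse diagram); it therefore suffices to count antichains once. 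Using the coordinates $e_i\pm e_j$ together with the diagonal roots, an antichain corresponds to a centrally symmetric nonnesting configuration, which one encodes as a monotone lattice path in an $n\times n$ square; there are $\binom{2n}{n}$ of these, giving $\mu(B_n)=\mu(C_n)=\binom{2n}{n}$.

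Type $D_n$ is where I expect the real difficulty, and it is the main obstacle. Its Dynkin diagram branches, so the top of the root poset is a fork arising from the two incomparable simple roots $e_{n-1}-e_n$ and $e_{n-1}+e_n$, and the clean staircase model of the $B_n/C_n$ case no longer applies verbatim. The natural strategy is to partition the antichains according to how they meet this forked part --- informally, according to whether the roots straddling the fork are selected --- into two families that are separately counted by lattice paths, producing the two summands $\binom{2n-1}{n}$ and $\binom{2n-2}{n}$. The delicate point is to arrange the case division so that it is an exact partition, with neither omissions nor double counting, and then to identify each resulting path count.

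As a cross-check, and as an alternative to the bijective route, I would note that the number of antichains of $(\Phi_0^+,\preceq)$ equals the $W_0$-Catalan number $\prod_{i=1}^{n}\frac{d_i+h}{d_i}$, where the $d_i$ are the degrees and $h$ the Coxeter number; substituting the degrees $2,3,\dots,n+1$ (type $A_n$), $2,4,\dots,2n$ (types $B_n,C_n$), and $2,4,\dots,2n-2,n$ (type $D_n$) and simplifying reproduces the three formulas, the type $D_n$ case reducing to the identity $\binom{2n-2}{n-1}\cdot\frac{3n-2}{n}=\binom{2n-1}{n}+\binom{2n-2}{n}$. The small cases $D_2=A_1\times A_1$ (value $4$) and $D_3=A_3$ (value $14$) match both formulas and provide a useful sanity check.
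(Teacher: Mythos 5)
The paper itself offers no proof of this statement: it is quoted from Shi (Theorem~3.2 of \cite{Shi97}), and the surrounding text only describes Shi's method, namely a case-by-case enumeration of the root ideals of $(\Phi_0^+,\preceq)$ in each classical type. Your reduction of $\mu(X)$ to counting root ideals, equivalently antichains, via the bijections of the survey is exactly that strategy, and your treatment of $A_n$ (nonnesting partitions encoded by Dyck paths, giving $\frac{1}{n+2}\binom{2n+2}{n+1}$) and of $B_n$, $C_n$ (the two root posets are indeed isomorphic, and the antichain count $\binom{2n}{n}$ is correct) is sound in outline. The genuine gap is type $D_n$: you explicitly identify it as ``the main obstacle'' and then do not carry it out. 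No partition of the antichains into two families is defined, no lattice-path encoding is given, and no argument is offered that such a case division would be exhaustive and disjoint, so the two summands $\binom{2n-1}{n}$ and $\binom{2n-2}{n}$ are never actually produced. (Incidentally, your description of the $D_n$ poset is off: the simple roots $e_{n-1}-e_n$ and $e_{n-1}+e_n$ are \emph{minimal} elements, so the ``fork'' sits at the bottom and the doubling occurs in the middle ranks, which is precisely what makes the staircase model fail.) Since this is the case that forces Shi's original proof to be a lengthy enumeration, the proposal is incomplete exactly at its hardest point.

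The fallback you offer does not close this gap. Invoking ``the number of antichains of $(\Phi_0^+,\preceq)$ equals $\prod_i (d_i+h)/d_i$'' assumes a theorem at least as deep as the one being proved: in this paper's own narrative, that formula (the theorem of Athanasiadis quoted immediately after the present one) is \emph{deduced from} the present theorem together with Athanasiadis's earlier characteristic-polynomial computations, so using it here is circular; and its independent proofs in the classical types are themselves case-by-case work of comparable substance. Your arithmetic is right --- the identity $\frac{3n-2}{n}\binom{2n-2}{n-1} = \binom{2n-1}{n}+\binom{2n-2}{n}$ does hold, and the checks for $D_2$ and $D_3$ are correct --- so the Catalan formula is a legitimate sanity check, but it cannot substitute for the missing $D_n$ enumeration.
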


\medskip

Notice that $\mu(A_n)$ is the $(n+1)$-th Catalan number $C_{n+1}$.  He also gives in \cite[Theorem 3.6]{Shi97} the values of $\mu(X)$ when $X$ is exceptional,  but his method is not uniform and he ends his article by asking whether there exists a uniform formula for the numbers $\mu(X)$ \cite[Remark 3.7]{Shi97}.

\bigskip

$\bullet$ The notion of root ideal has also a strong connection with non-nesting and non-crossing partitions and the literature on the topic is extensive.  For example
it is well known that the number of non-crossing partitions and non-nesting partitions of $\{1,2,\dots,n\}$ is equal to the $n$-th Catalan number $C_n$.  

As explained in \cite[Remark 2]{Rei97},  in the non-nesting case Postnikov was able to redefine the non-nesting partitions of $\{1,2,\dots,n\}$ in terms of the root system of type $ A_{n-1}$ in a way that generalizes to all Weyl groups.  To do so,  Postnikov defines a non-nesting partition as an antichain of $(\Phi_0^+,\preceq)$,  that is a subset of $\Phi_0^+$ where the elements are mutually incomparable.

Postnikov also noticed the existence of a bijection between the set of antichains of $(\Phi_0^+,\preceq)$ and the set of dominant Catalan regions \cite{At98}, that is the regions of the Catalan hyperplane arrangement that lie in the fundamental chamber of $W_0$.  Let us denote by $\text{Cat}(\Phi_0)$ the number of antichains of $(\Phi_0^+,\preceq)$.  By the above bijection,  $\text{Cat}(\Phi_0)$ also counts the number of  dominant Catalan regions, which explains the terminology.

 However,  it turns out that the set of dominant Catalan regions is the same set as the set of dominant Shi regions,  therefore by Theorem \ref{number dom Shi regions} we know  $\text{Cat}(\Phi_0)$ for $\Phi_0 \in \{A_n,B_n, C_n, D_n\}$.  Using these observations and a former result  \cite[Theorem 5.5]{At96},  an intermediate answer of Shi's question \cite[Remark 3.7]{Shi97} was given by Athanasiadis:

\medskip

\begin{thm}[{\cite[Theorem 2.4]{At98}}]
Let $h$ be the Coxeter number of $\Phi_0$ and $e_1,\dots, e_n$ its exponents. The number of non-nesting partitions (that is antichains or dominant Shi regions) in type $X \in \{A_n, B_n, C_n, D_n\}$ is given by
$$
\prod\limits_{i=1}^n \displaystyle\frac{e_i+h+1}{e_i+1}.
$$
\end{thm}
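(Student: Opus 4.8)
The plan is to leverage the chain of identifications assembled above and reduce the statement to a finite, type-by-type computation. Recall that an antichain of $(\Phi_0^+,\preceq)$ is exactly the set of minimal elements of a root ideal, so antichains are in bijection with root ideals; by Shi's bijection discussed above, root ideals correspond to dominant Shi regions, and by Postnikov's observation these coincide with the dominant Catalan regions. Hence the number of non-nesting partitions in type $X$ equals the number $\mu(X)$ of dominant Shi regions supplied by Theorem \ref{number dom Shi regions}. It therefore suffices to verify, for each $X \in \{A_n, B_n, C_n, D_n\}$, that the uniform product $\prod_{i=1}^n \frac{e_i+h+1}{e_i+1}$ evaluates to the binomial expression for $\mu(X)$, using the standard root-theoretic data of each type.

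First I would record that data and dispatch the two easy families. In type $A_n$ one has $h=n+1$ and exponents $e_i=i$, so the product simplifies to $\frac{(2n+2)!}{(n+2)!\,(n+1)!}$, which is precisely $\frac{1}{n+2}\binom{2n+2}{n+1}$. In types $B_n$ and $C_n$ one has $h=2n$ and exponents $1,3,\dots,2n-1$, so each factor simplifies as $\frac{2i+2n}{2i}=\frac{i+n}{i}$ and the product telescopes to $\binom{2n}{n}$. Both of these match Theorem \ref{number dom Shi regions} directly, and the manipulations are routine ratios of factorials.

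The main obstacle will be type $D_n$, where the exponents do not form a single arithmetic progression: besides the odd values $1,3,\dots,2n-3$ there is the exceptional exponent $n-1$, while $h=2n-2$. Here the odd exponents contribute, after the same simplification as in type $C$, a product equal to $\binom{2n-2}{n-1}$, and the exceptional exponent $n-1$ contributes the single extra factor $\frac{3n-2}{n}$. The remaining work is then the identity
$$
\binom{2n-2}{n-1}\cdot\frac{3n-2}{n} \;=\; \binom{2n-1}{n}+\binom{2n-2}{n},
$$
which I would establish by writing both sides over the common denominator $n!\,(n-1)!$, using $(2n-1)!=(2n-1)(2n-2)!$ and $(n-1)!=(n-1)(n-2)!$ to collapse the right-hand side to $\frac{(2n-2)!\,(3n-2)}{n!\,(n-1)!}$, which agrees with the left-hand side. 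Verifying this identity completes the check in all four classical types and hence proves the formula.
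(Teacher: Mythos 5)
Your proposal is correct: I checked the arithmetic in all three cases, and in particular the type $D_n$ identity $\binom{2n-2}{n-1}\cdot\frac{3n-2}{n}=\binom{2n-1}{n}+\binom{2n-2}{n}$ does collapse to $\frac{(2n-2)!(3n-2)}{n!(n-1)!}$ on both sides exactly as you say. Note, however, that the paper itself does not prove this statement — it is quoted as Athanasiadis's theorem, and Athanasiadis's own derivation (via \cite[Theorem 5.5]{At96}) rests on characteristic-polynomial computations for the Catalan arrangement rather than on matching Shi's binomials. What you have done is turn the survey's surrounding remark — that the bijection chain (antichains $\leftrightarrow$ root ideals $\leftrightarrow$ dominant Shi regions $\leftrightarrow$ dominant Catalan regions) reduces the count to Theorem \ref{number dom Shi regions} — into a complete argument by verifying, type by type, that the uniform product $\prod_{i=1}^n\frac{e_i+h+1}{e_i+1}$ agrees with Shi's case-by-case formulas. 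This buys an elementary, self-contained proof from ingredients already in the paper, at the cost of remaining case-by-case (which is harmless here, since the statement is only asserted for the classical types); Athanasiadis's approach, by contrast, is what later led to the uniform formula $\prod_i\frac{e_i+h+1}{e_i+1}$ valid in all types, answering Shi's question in full. One small point of care that you handled correctly implicitly: in type $D_n$ with $n$ even, the exceptional exponent $n-1$ coincides with one of the odd exponents, so the exponents must be treated as a multiset of size $n$ — your split into the $n-1$ odd exponents plus one extra factor $\frac{3n-2}{n}$ does this properly.
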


\bigskip

Let us now express Postnikov's bijection.  Recall that $C_{\circ}$ is the fundamental chamber of $W_0$ and $W$ is the affine Weyl group associated to $W_0$.  For an antichain $A$ of $(\Phi_0^+,\preceq)$ we set:
$$
\mathcal{R}_A := \{x \in C_{\circ}~|~\langle \beta, x \rangle > 1~\text{if}~\alpha \preceq \beta~\text{for some}~\alpha \in A,~\langle \beta, x \rangle < 1~\text{otherwise}\}.
$$
 Postnikov's bijection is then given by:
$$
\begin{array}{ccc}
\{\text{antichains of}~ (\Phi_0^+,\preceq)\} & \longrightarrow & \{\text{dominants Shi regions of} ~W\}\\
                            A       				  & \longmapsto  & \mathcal{R}_A.
\end{array}
$$

\bigskip

 \subsection{Background on ad-nilpotent ideals}

  In \cite[Theorem 1]{CePa02}, Cellini and Papi give an explicit bijection between  the sets of ad-nilpotent ideals of a Borel subalgebra $\mathfrak{b}$ of simple Lie algebra $\mathfrak{g}$ and the $W_0$-orbits of $Q/(h+1)Q$ where $W_0$ is the Weyl group of $\mathfrak{g}$, $h$ the Coxeter number of $W_0$ and $Q$ the coroot lattice associtated to $\Phi_0$.  Recall that an ideal of $\mathcal{J}$ of $\mathfrak{b}$ is called an ad-nilpotent ideal if it is contained in the nilradical $[\mathfrak{b},\mathfrak{b}]$ of $\mathfrak{b}$.

Using this bijection and some results of Haiman from \cite[Section 7.4]{Ha94},  they also provide a formula that counts the number of ad-nilpotent ideals of $\mathfrak{b}$:

\begin{thm}[{\cite[Theorem 1, Formula (1)]{CePa02}}]
Let $h$ be the Coxeter number of $\Phi_0$ and $e_1,\dots, e_n$ its exponents. The number of ad-nilpotent ideals of $\mathfrak{b}$ is given by
$$
\displaystyle\frac{1}{|W_0|}\prod\limits_{i=1}^n (e_i+h+1).
$$
\end{thm}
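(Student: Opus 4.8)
The plan is to combine the Cellini--Papi bijection with Burnside's orbit-counting lemma and the Shephard--Todd--Solomon identity, with essentially all of the difficulty concentrated in a single number-theoretic fact about $h+1$. First I would invoke \cite[Theorem 1]{CePa02}: since the ad-nilpotent ideals of $\mathfrak b$ are in bijection with the $W_0$-orbits of $Q/(h+1)Q$, it suffices to count these orbits. Writing $m=h+1$, Burnside's lemma gives
$$
\#\{\text{ad-nilpotent ideals of }\mathfrak b\} \;=\; \#\bigl(W_0\backslash (Q/mQ)\bigr) \;=\; \frac{1}{|W_0|}\sum_{w\in W_0}|\fix_{Q/mQ}(w)|,
$$
where $\fix_{Q/mQ}(w)$ is the set of points of the finite abelian group $Q/mQ$ fixed by $w$, so that everything reduces to evaluating the inner sum.

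Next I would compute each term $|\fix_{Q/mQ}(w)|$. The fixed points form the kernel of $w-\mathrm{id}$ acting on $Q/mQ$, so if $s_1\mid\cdots\mid s_n$ denote the elementary divisors of the integer operator $w-\mathrm{id}$ on $Q$ (with $\dim V^w$ of them vanishing, where $V^w$ is the fixed subspace), then $|\fix_{Q/mQ}(w)|=m^{\dim V^w}\prod_{s_i\neq 0}\gcd(s_i,m)$. The point I would stress is that the product of the nonzero $s_i$ is the order of the torsion subgroup of $\operatorname{coker}(w-\mathrm{id})$, whose prime divisors are torsion primes of $\Phi_0$ and hence lie in $\{2,3,5\}$. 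A short case inspection of the Coxeter numbers (for instance $h+1$ is prime in the exceptional types and odd in types $B,C,D$, and type $A$ lattices are torsion-free) shows that $m=h+1$ is coprime to every such prime, whence $\gcd(s_i,m)=1$ for all nonzero $s_i$ and
$$
|\fix_{Q/mQ}(w)| \;=\; m^{\dim V^w} \qquad\text{for every } w\in W_0.
$$
I expect this coprimality statement---equivalently, that $h+1$ never divides the torsion of $\operatorname{coker}(w-\mathrm{id})$---to be the main obstacle; it is exactly the input that Cellini and Papi extract from \cite[Section 7.4]{Ha94}.

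Finally I would feed the pointwise formula into the Shephard--Todd--Solomon identity $\sum_{w\in W_0}t^{\dim V^w}=\prod_{i=1}^n(t+e_i)$. Specializing $t=m=h+1$ gives $\sum_{w\in W_0}|\fix_{Q/mQ}(w)|=\prod_{i=1}^n(e_i+h+1)$, and dividing by $|W_0|$ yields the claimed count. As a consistency check, the resulting number equals $\prod_{i=1}^n\frac{e_i+h+1}{e_i+1}$, the Catalan-type quantity already recorded in the survey above, confirming that ad-nilpotent ideals are enumerated by the same formula as dominant Shi regions.
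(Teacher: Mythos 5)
You should know at the outset that the paper contains no proof of this statement: it is quoted verbatim from Cellini--Papi, and the note merely records that their proof combines \cite[Theorem 1]{CePa02} (the bijection with $W_0$-orbits of $Q/(h+1)Q$) with results of Haiman from \cite[Section 7.4]{Ha94}. Your architecture --- bijection, then Burnside's lemma, then the pointwise evaluation $|\fix_{Q/mQ}(w)|=m^{\dim V^w}$ for $m=h+1$, then the Shephard--Todd--Solomon identity $\sum_{w\in W_0}t^{\dim V^w}=\prod_{i=1}^n(t+e_i)$ specialized at $t=h+1$ --- is exactly the route of the original argument, and you correctly isolate the coprimality of $h+1$ with the torsion of $\operatorname{coker}(w-\mathrm{id})$ as the crux, which is indeed the content extracted from Haiman.

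However, your stated justification of that crux is false. The prime divisors of the torsion of $\operatorname{coker}(w-\mathrm{id})$ on $Q$ are \emph{not} confined to the torsion primes of $\Phi_0$: in type $A_{n-1}$, which has no torsion primes at all, a Coxeter element $c$ (an $n$-cycle) satisfies $\det(c-\mathrm{id})=\pm n$ on the root lattice, so $\operatorname{coker}(c-\mathrm{id})\cong\mathbb{Z}/n\mathbb{Z}$ and every prime dividing $n$ occurs; similarly $w=-\mathrm{id}$ in type $C_n$ gives cokernel $(\mathbb{Z}/2\mathbb{Z})^n$ although $C_n$ has no torsion primes. Your parenthetical ``type $A$ lattices are torsion-free'' conflates the lattice (always torsion-free) with the cokernel, which is exactly where the torsion lives, so as written your argument fails in type $A$: the coprimality there is saved not by absence of torsion but by the fact that the torsion order of $\operatorname{coker}(w-\mathrm{id})$ divides the gcd of the cycle lengths of $w$, hence divides $n$, and $\gcd(n,n+1)=1$. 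The correct statement is genuinely type-dependent (one must control, for each type, which primes can divide these cokernels --- they involve the connection index $[P:Q]$ and related data, not the torsion primes --- and then check coprimality with $h+1$: $n+1$ versus $n$ in type $A_{n-1}$, $h+1$ odd in $B_n,C_n,D_n$, and $h+1\in\{7,13,13,19,31\}$ in the exceptional types). Since you ultimately defer precisely this fact to \cite[Section 7.4]{Ha94}, your outline is repairable with the citation doing the real work, but the supporting claim you offer in its place is wrong and the type-$A$ case it purports to dismiss is the one where the subtlety is most visible.
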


Furthermore,  they also explain \cite[\S3 and \S4(1)]{CePa02} that ad-nilpotent ideals of $\mathfrak{b}$ are naturally in bijection with root ideals of $(\Phi_0^+,\preceq)$.  The bijection is as follows: first, they show that for any ad-nilpotent ideal $\mathcal{J}$ of $\mathfrak{b}$ there exists $\Phi_{\mathcal{J}} \subseteq \Phi_0^+$ such that $\mathcal{J} = \bigoplus_{\alpha \in \Phi_{\mathcal{J}}} \mathfrak{g}_{\alpha}$ where  $\mathfrak{g}_{\alpha}$ is the usual root space of $\mathfrak{g}$ associated to $\alpha$.  Then they show that the set $\Phi_{\mathcal{J}}$ is a root ideal of $(\Phi_0^+,\preceq)$.  Thus,  Cellini-Papi's bijection is  given by
$$
\begin{array}{ccc}
\{\text{ad-nilpotent ideals of}~ \mathfrak{b}\} & \longrightarrow & \{\text{root ideals of}~ (\Phi_0^+,\preceq)\}  \\
                             \mathcal{J}        				  & \longmapsto  & \Phi_{\mathcal{J}}.
\end{array}
$$

  As recalled in \cite[\S4 (1)]{CePa02}, it is a general fact from the theory of finite posets, that order ideals of a finite poset $(P, \leq)$ and antichains of $(P, \leq)$ are in bijection by mapping an order ideal $\Psi$ to the set of its minimal elements, which is defined by $\Psi_{\min}:=\{x \in \Psi~|~\text{if}~ y \leq x~\text{for some}~ y \in \Psi~\text{then}~x = y\}$.  Using this in the setting of root ideals we obtain the following bijection:
  $$
\begin{array}{ccc}
\{\text{root ideals of}~ (\Phi_0^+,\preceq)\} & \longrightarrow & \{\text{antichains of}~(\Phi_0^+,\preceq)\}\\
                             \Psi     				  & \longmapsto  & \Psi_{\min}.
\end{array}
$$

Finally,  \cite[Theorem 1]{CePa02} and its connections with root ideals, antichains and $\oplus$-sign types, provides a uniform enumeration of sign types in any type and gives a final answer to the question asked by Shi \cite[Remark 3.7]{Shi97}.

  \bigskip
  
  $\bullet$ In \cite{CePa00}, Cellini and Papi obtain a result that is of particular interest for us \cite[Theorem 2.6]{CePa00}.  First we need the following definitions, given in \cite[Section 2]{CePa00}: let $\Psi_1,  \Psi_2$ be two root ideals of $\Phi_0^+$.  We define a bracket relation on $\mathcal{I}(\Phi_0)$ by 
  $$
  [\Psi_1, \Psi_2] = \{\alpha_1+\alpha_2~|~\alpha_i \in \Psi_i~\text{and}~\alpha_1 + \alpha_2 \in \Phi_0^+\}
  $$
  and we set $\Psi^k = [\Psi^{k-1},\Psi]$.  This being defined, they give the following definition \cite[Definition 2.2]{CePa00} 
$$
  \mathbb L_\Psi:=\bigcup_{k\in \mathbb N^*} (k\delta-\Psi^k).
$$

We can now state their theorem, which is key in the following:

   \begin{thm}[{\cite[Theorem 2.6]{CePa00}}] \label{thm:CePa} Let $\mathcal{J}$ be an ad-nilpotent ideal of $\mathfrak{b}$ and let $\mathbb{L}_{\mathcal{J}} :=\mathbb{L}_{\Phi_{\mathcal{J}}}$. There is a unique element $w_{\mathcal{J}} \in W$ such that $$
\mathbb{L}_{\mathcal{J}} = N(w_{\mathcal{J}}).$$
\end{thm}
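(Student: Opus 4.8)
The plan is to identify $\mathbb{L}_{\mathcal J}$ as an inversion set by verifying the standard characterization: the map $w\mapsto N(w)$ is a bijection from $W$ onto the set of finite \emph{bi-closed} subsets of $\Phi^+$, where $N\subseteq\Phi^+$ is bi-closed when both $N$ and $\Phi^+\setminus N$ are closed under root addition, i.e. whenever $\gamma,\gamma'\in\Phi^+$ with $\gamma+\gamma'\in\Phi^+$ one has $\gamma,\gamma'\in N\Rightarrow\gamma+\gamma'\in N$ and $\gamma+\gamma'\in N\Rightarrow(\gamma\in N$ or $\gamma'\in N)$. Writing $\Psi=\Phi_{\mathcal J}$, the theorem then reduces to showing that $\mathbb{L}_{\mathcal J}=\bigcup_{k\ge1}(k\delta-\Psi^k)$ is a finite subset of $\Phi^+$ which is both closed and co-closed; uniqueness of $w_{\mathcal J}$ is automatic from injectivity of $w\mapsto N(w)$.

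Two reductions are immediate. For $\alpha\in\Psi^k\subseteq\Phi_0^+$ and $k\ge1$ we have $k\delta-\alpha\in\Phi_0^-+\mathbb N^*\delta\subseteq\Phi^+$, so $\mathbb{L}_{\mathcal J}\subseteq\Phi^+$; and since each element of $\Psi^k$ is a sum of $k$ positive roots, hence of height at least $k$, we have $\Psi^k=\varnothing$ once $k$ exceeds the maximal height $h-1$, so $\mathbb{L}_{\mathcal J}$ is finite. The substance lies in three properties of the powers $\Psi^k$, which I would obtain Lie-theoretically rather than combinatorially. Observe that $\Psi^k$ is exactly the root set of the $k$-th term $\gamma_k(\mathcal J)$ of the lower central series of the nilpotent Lie algebra $\mathcal J$, since $[\mathfrak g_\mu,\mathfrak g_\nu]=\mathfrak g_{\mu+\nu}$ translates the bracket on ideals into the bracket on root ideals. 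Each $\gamma_k(\mathcal J)$ is again an ideal of $\mathfrak b$ contained in $[\mathfrak b,\mathfrak b]$, hence an ad-nilpotent ideal; so by the bijection between ad-nilpotent ideals and root ideals recalled above, (a) every $\Psi^k$ is a root ideal of $(\Phi_0^+,\preceq)$. Moreover (b) the chain is decreasing, $\Psi^1\supseteq\Psi^2\supseteq\cdots$, because $\gamma_k(\mathcal J)\subseteq\gamma_{k-1}(\mathcal J)$; and (c) the product inclusion $[\Psi^a,\Psi^b]\subseteq\Psi^{a+b}$ holds, being the classical filtration identity $[\gamma_a(\mathcal J),\gamma_b(\mathcal J)]\subseteq\gamma_{a+b}(\mathcal J)$ for lower central series.

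Granting (a)--(c), closedness and most of co-closedness follow formally. If $k_1\delta-\alpha_1,\,k_2\delta-\alpha_2\in\mathbb{L}_{\mathcal J}$, their sum is a root only when $\alpha_1+\alpha_2\in\Phi_0^+$, and then $\alpha_1+\alpha_2\in[\Psi^{k_1},\Psi^{k_2}]\subseteq\Psi^{k_1+k_2}$ by (c), so the sum lies in $\mathbb{L}_{\mathcal J}$; this gives closedness. For co-closedness, let $\gamma_1,\gamma_2\in\Phi^+$ with $\gamma_1+\gamma_2=k\delta-\alpha\in\mathbb{L}_{\mathcal J}$, and write $\gamma_i=m_i\delta+\beta_i$, so that $m_1+m_2=k$ and $\beta_1+\beta_2=-\alpha\in\Phi_0^-$. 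Since the sum is negative on $\Phi_0$, at least one $\beta_i$ is negative. If exactly one is, say $\beta_2\in\Phi_0^-$ (so $m_2\ge1$) and $\beta_1\in\Phi_0^+$, then $-\beta_2=\alpha+\beta_1\succeq\alpha$, and as $\alpha\in\Psi^k\subseteq\Psi^{m_2}$ by (b) while $\Psi^{m_2}$ is up-closed by (a), we get $-\beta_2\in\Psi^{m_2}$, i.e. $\gamma_2\in\mathbb{L}_{\mathcal J}$. The remaining case, $\beta_1,\beta_2\in\Phi_0^-$, reduces to the co-closure statement: (d) if $\mu_1,\mu_2,\mu_1+\mu_2\in\Phi_0^+$ and $\mu_1+\mu_2\in\Psi^{a+b}$, then $\mu_1\in\Psi^a$ or $\mu_2\in\Psi^b$; applying it to $\mu_i=-\beta_i$, $a=m_1$, $b=m_2$ yields $\gamma_1\in\mathbb{L}_{\mathcal J}$ or $\gamma_2\in\mathbb{L}_{\mathcal J}$. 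This completes co-closedness, and with it the identification $\mathbb{L}_{\mathcal J}=N(w_{\mathcal J})$.

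The main obstacle is precisely statement (d), the converse-direction ``co-closure'' of the powers. Unlike (a)--(c) it is not a formal consequence of the filtration axioms: up-closedness of $\Psi^{a+b}$ only propagates membership upward, whereas (d) asks one to push a factorization of $\mu_1+\mu_2$ back onto the summands. The natural approach is to use the description of $\Psi^k$ as the roots admitting a decomposition into $k$ elements of $\Psi$ all of whose partial sums are roots, and then to split a given length-$(a+b)$ decomposition of $\mu_1+\mu_2$ into parts summing to $\mu_1$ and to $\mu_2$; this rests on the reordering lemma that any family of positive roots whose total is a root can be ordered with all partial sums roots. I expect establishing (d) uniformly across types --- in particular handling the non-simply-laced root strings, where the elementary simple-root induction for such splittings breaks down --- to be the delicate core of the argument.
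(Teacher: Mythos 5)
The first thing to note is that the paper contains no proof of this statement: it is imported verbatim as \cite[Theorem 2.6]{CePa00}, so the only proof to measure your attempt against is Cellini--Papi's original one. Your framework is in fact the same as theirs: show that $\mathbb{L}_{\mathcal J}$ is a finite subset of $\Phi^+$ that is both closed and co-closed under root addition, then invoke the standard bijection between elements of $W$ and finite biclosed subsets of $\Phi^+$, which gives existence and uniqueness of $w_{\mathcal J}$ at once. Your preliminary reductions are all sound: the containment $\mathbb{L}_{\mathcal J}\subseteq\Phi^+$, finiteness via the height bound, closedness from $[\Psi^a,\Psi^b]\subseteq\Psi^{a+b}$, and the mixed-sign case of co-closedness from the facts that each $\Psi^k$ is a root ideal and that the chain $\Psi^1\supseteq\Psi^2\supseteq\cdots$ is decreasing; your Lie-theoretic justification of (a)--(c) through the lower central series $\gamma_k(\mathcal J)$ is correct (each $\gamma_k(\mathcal J)$ is an ideal of $\mathfrak b$ by the Jacobi identity, and $[\mathfrak g_\mu,\mathfrak g_\nu]=\mathfrak g_{\mu+\nu}$ for $\mu+\nu\in\Phi_0$ identifies its root set with $\Psi^k$).

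The problem is that your argument stops exactly where the real content of the theorem begins. Statement (d) --- if $\mu_1,\mu_2,\mu_1+\mu_2\in\Phi_0^+$ and $\mu_1+\mu_2\in\Psi^{a+b}$, then $\mu_1\in\Psi^a$ or $\mu_2\in\Psi^b$ --- is stated, correctly identified as non-formal, and then deferred rather than proved. This is precisely the key lemma of Cellini--Papi's proof: they establish a splitting statement of the form ``if $\alpha\in\Psi^k$ and $\alpha=\mu_1+\mu_2$ with $\mu_i\in\Phi_0^+$, then $\mu_i\in\Psi^{k_i}$ for some $k_1+k_2=k$, $k_i\geq 0$, with the convention $\Psi^0=\Phi_0^+$,'' from which your (d) follows immediately (if $k_1<a$ then $k_2>b$, and the $\Psi^j$ decrease); its proof is an induction resting on rank-two root-string analysis, and, as you yourself note, it is exactly the place where non-simply-laced configurations must be handled. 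Since everything else in your write-up is routine bookkeeping once (d) is granted, what you have produced is a correct and well-organized reduction of the theorem to its hardest ingredient, not a proof of it: the gap is genuine.
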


\medskip

The two following propositions sum up to Proposition \ref{prop CP}, which is essential in order to prove Proposition \ref{cor:LLshiDom} ($iii$).  We point out that Propositions \ref{PropCP1} and  \ref{PropCP2}  are given with a different terminology in their original version.
  
\medskip
  
  \begin{prop}[{\cite[Proposition 2.12(ii)]{CePa00}}]\label{PropCP1}
  Let $\mathcal{J}$ be an ad-nilpotent of $\mathfrak{b}$.  We have 
  $$
  ND_R(w_{\mathcal{J}}) \subseteq \{\delta - \alpha~|~\alpha \in \Phi_{0}^+\}.
  $$
  \end{prop}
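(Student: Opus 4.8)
The plan is to read off the inversion set of $w_{\mathcal{J}}$ from Theorem \ref{thm:CePa} and then argue that only its ``lowest layer'' can contribute right descent roots. By Theorem \ref{thm:CePa} we have
$$
N(w_{\mathcal{J}}) = \mathbb{L}_{\mathcal{J}} = \bigcup_{k \in \mathbb{N}^*}\bigl(k\delta - \Phi_{\mathcal{J}}^k\bigr),
$$
so every root in $N(w_{\mathcal{J}})$ has the form $k\delta - \gamma$ with $k \geq 1$ and $\gamma \in \Phi_{\mathcal{J}}^k \subseteq \Phi_0^+$. Since a root $\delta - \gamma$ with $\gamma \in \Phi_{\mathcal{J}}^1 = \Phi_{\mathcal{J}} \subseteq \Phi_0^+$ already lies in $\{\delta - \alpha \mid \alpha \in \Phi_0^+\}$, it suffices to prove that no element of $ND_R(w_{\mathcal{J}})$ has the form $k\delta - \gamma$ with $k \geq 2$.

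The key lemma I would isolate is the following standard consequence of the biconvexity of inversion sets: if $\beta \in ND_R(w)$ then $N(w) \setminus \{\beta\}$ is again an inversion set (indeed $N(w) \setminus \{\beta\} = N(ws)$ for the descent $s$ with $\beta = -w(\alpha_s)$), hence it is closed under addition of roots inside $\Phi^+$. Consequently $\beta$ cannot be written as $\beta = \gamma_1 + \gamma_2$ with $\gamma_1, \gamma_2 \in N(w)$: otherwise $\gamma_1, \gamma_2$ would be two elements of $N(w) \setminus \{\beta\}$ whose sum $\beta$ lies outside $N(w) \setminus \{\beta\}$, contradicting closedness. In other words, every right descent root is \emph{indecomposable} in $N(w)$.

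Granting this, I would finish by exhibiting an explicit decomposition of the higher layers. Fix $\beta = k\delta - \gamma \in N(w_{\mathcal{J}})$ with $k \geq 2$ and $\gamma \in \Phi_{\mathcal{J}}^k$. By the very definition $\Phi_{\mathcal{J}}^k = [\Phi_{\mathcal{J}}^{k-1}, \Phi_{\mathcal{J}}]$ there are $\mu \in \Phi_{\mathcal{J}}^{k-1}$ and $\nu \in \Phi_{\mathcal{J}}$ with $\gamma = \mu + \nu \in \Phi_0^+$. Then
$$
\beta = \bigl((k-1)\delta - \mu\bigr) + \bigl(\delta - \nu\bigr),
$$
and both summands lie in $N(w_{\mathcal{J}}) = \mathbb{L}_{\mathcal{J}}$: the first belongs to $(k-1)\delta - \Phi_{\mathcal{J}}^{k-1}$ (using $k-1 \geq 1$), the second to $\delta - \Phi_{\mathcal{J}}^{1}$. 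As positive roots with sum $\beta$, each summand is distinct from $\beta$, so $\beta$ is decomposable in $N(w_{\mathcal{J}})$ and hence, by the lemma, $\beta \notin ND_R(w_{\mathcal{J}})$. Therefore every right descent root of $w_{\mathcal{J}}$ sits in the layer $k = 1$, i.e. is of the form $\delta - \gamma$ with $\gamma \in \Phi_{\mathcal{J}} \subseteq \Phi_0^+$, which is the claim.

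The only genuinely non-formal point is the lemma of the second paragraph; everything else is a direct unwinding of the definition of $\mathbb{L}_{\mathcal{J}}$ and of the bracket powers $\Phi_{\mathcal{J}}^k$. I expect the main obstacle to be stating and citing (or quickly reproving) the indecomposability of descent roots in the inversion set, together with checking that the two summands $(k-1)\delta - \mu$ and $\delta - \nu$ genuinely land in $\mathbb{L}_{\mathcal{J}}$ rather than merely in $\Phi^+$ — but this is immediate from the union defining $\mathbb{L}_{\mathcal{J}}$ as soon as $k \geq 2$.
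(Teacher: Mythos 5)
Your proof is correct, but note that it cannot be measured against ``the paper's own proof'' in the usual sense: the paper gives no proof of Proposition \ref{PropCP1}, importing it (with changed terminology) from \cite[Proposition 2.12(ii)]{CePa00}. What you have produced is a self-contained derivation from Theorem \ref{thm:CePa} alone, and it holds up. Every element of $N(w_{\mathcal{J}})=\mathbb{L}_{\mathcal{J}}$ has the form $k\delta-\gamma$ with $k\geq 1$ and $\gamma\in\Phi_{\mathcal{J}}^k\subseteq\Phi_0^+$, and the layer index $k$ is well defined as the coefficient of $\delta$; your indecomposability lemma is sound, since $N(w)\setminus\{\beta\}=N(ws)$ by the identity $N(u)=N(w)\setminus\{u(\alpha_s)\}$ quoted in the paper's introduction (with $u(\alpha_s)=-w(\alpha_s)=\beta$), the exclusion $\gamma_i\neq\beta$ being automatic because $\beta-\gamma_i$ is a positive root; and the splitting $k\delta-\gamma=\bigl((k-1)\delta-\mu\bigr)+\bigl(\delta-\nu\bigr)$ with $\mu\in\Phi_{\mathcal{J}}^{k-1}$, $\nu\in\Phi_{\mathcal{J}}$ does exhibit every root of layer $k\geq 2$ as decomposable, both summands being genuine positive roots (the first lies in $\Phi_0^-+\mathbb{N}^*\delta$ since $k-1\geq 1$) and genuine elements of $\mathbb{L}_{\mathcal{J}}$. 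The one ingredient you use that the paper's background does not explicitly state is the closedness of inversion sets under addition of roots (needed to conclude $\beta\in N(ws)$); this is standard for crystallographic, hence affine, root systems, but it deserves an explicit citation or a two-line proof, as you yourself anticipate. A pleasant by-product worth flagging: you actually prove the stronger containment $ND_R(w_{\mathcal{J}})\subseteq\{\delta-\alpha~|~\alpha\in\Phi_{\mathcal{J}}\}$, which combined with Proposition \ref{PropCP2} yields Proposition \ref{prop CP} directly, so your argument could replace the external citation and slightly streamline the paper's chain Propositions \ref{PropCP1}--\ref{prop CP}.
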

  
  \medskip
  
    \begin{prop}[{\cite[Proposition 3.4]{CePa04}}]\label{PropCP2}
  Let $\mathcal{J}$ be an ad-nilpotent of $\mathfrak{b}$.  We have 
  $$
  ND_R(w_{\mathcal{J}}) \cap  \{\delta - \alpha~|~\alpha \in \Phi_0^+\} =  \{\delta - \alpha~|~\alpha \in (\Phi_{\mathcal{J}})_{\min}\}.
  $$
  \end{prop}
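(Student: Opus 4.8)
The plan is to combine Proposition \ref{PropCP1} with the explicit description of the inversion set $N(w_{\mathcal{J}})=\mathbb{L}_{\mathcal{J}}=\bigcup_{k\ge 1}(k\delta-\Phi_{\mathcal{J}}^{k})$ furnished by Theorem \ref{thm:CePa}. The roots of $N(w_{\mathcal{J}})$ whose $\delta$-coefficient equals $1$ are precisely $\{\delta-\alpha\mid\alpha\in\Phi_{\mathcal{J}}^{1}\}=\{\delta-\alpha\mid\alpha\in\Phi_{\mathcal{J}}\}$, since $\Phi_{\mathcal{J}}^{1}=\Phi_{\mathcal{J}}$. By Proposition \ref{PropCP1} every element of $ND_R(w_{\mathcal{J}})$ already has the form $\delta-\alpha$ with $\alpha\in\Phi_0^+$, and being an inversion it forces $\alpha\in\Phi_{\mathcal{J}}$. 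Hence the left-hand intersection in the statement is all of $ND_R(w_{\mathcal{J}})$, and the proposition reduces to the equivalence
$$
\delta-\alpha\in ND_R(w_{\mathcal{J}})\iff \alpha\in(\Phi_{\mathcal{J}})_{\min}\qquad(\alpha\in\Phi_{\mathcal{J}}).
$$

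To test membership in $ND_R$ I would use the standard fact that $\beta\in ND_R(w)$ if and only if $\beta\in N(w)$ and $N(w)\setminus\{\beta\}$ is again an inversion set, i.e.\ biconvex. Concretely this means: (C1) $\beta$ is not a sum of two elements of $N(w)$ (closure), and (C2) there is no $\eta\in\Phi^+\setminus N(w)$ with $\beta+\eta\in N(w)$ (co-closure). For a level-one root $\beta=\delta-\alpha$ condition (C1) is automatic, because every root of $N(w_{\mathcal{J}})$ has $\delta$-coefficient at least $1$, so a sum of two of them has $\delta$-coefficient at least $2$. Thus the whole content is carried by (C2).

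For the forward (``only if'') direction, suppose $\alpha\notin(\Phi_{\mathcal{J}})_{\min}$. Since $\Phi_{\mathcal{J}}$ is a root ideal, I choose $\gamma\in\Phi_{\mathcal{J}}$ covered by $\alpha$: starting from any $\gamma_0\in\Phi_{\mathcal{J}}$ with $\gamma_0\prec\alpha$, a saturated chain up to $\alpha$ adds simple roots at each step, so the vertex $\gamma$ just below $\alpha$ lies in the ideal and $\alpha-\gamma$ is a simple root. Then $\eta:=\alpha-\gamma\in\Phi_0^+\subseteq\Phi^+$ is not an inversion (its $\delta$-coefficient is $0$), while $(\delta-\alpha)+\eta=\delta-\gamma\in N(w_{\mathcal{J}})$; this violates (C2), so $\delta-\alpha\notin ND_R(w_{\mathcal{J}})$.

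For the converse I assume $\alpha\in(\Phi_{\mathcal{J}})_{\min}$ and verify (C2). If $(\delta-\alpha)+\eta=j\delta-\gamma\in N(w_{\mathcal{J}})$ with $\gamma\in\Phi_{\mathcal{J}}^{j}$, then $\eta=(j-1)\delta-(\gamma-\alpha)$, which lies in $\Phi^+$ only if $\gamma-\alpha\in\Phi_0$. When $j=1$ this forces $\gamma\prec\alpha$ with $\gamma\in\Phi_{\mathcal{J}}$, impossible by minimality; and for any $j$ the subcase $\gamma-\alpha\in\Phi_0^-$ likewise produces an element $\gamma\in\Phi_{\mathcal{J}}^{j}\subseteq\Phi_{\mathcal{J}}$ strictly below $\alpha$, again impossible. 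The remaining, and genuinely hard, case is $j\ge 2$ with $\theta:=\gamma-\alpha\in\Phi_0^+$: here $\eta=(j-1)\delta-\theta$, and I need to show $\eta\in N(w_{\mathcal{J}})$ (so that it is not a witness against (C2)), i.e.\ that $\theta\in\Phi_{\mathcal{J}}^{\,j-1}$. This is the main obstacle, and it is exactly the cancellation lemma: if $\alpha\in(\Phi_{\mathcal{J}})_{\min}$ and $\alpha+\theta\in\Phi_{\mathcal{J}}^{j}$ with $\theta\in\Phi_0^+$, then $\theta\in\Phi_{\mathcal{J}}^{\,j-1}$. I would prove it by induction on $j$, using that each bracket power $\Phi_{\mathcal{J}}^{k}$ is itself a root ideal: in a decomposition $\gamma=\mu+\nu$ with $\mu\in\Phi_{\mathcal{J}}^{\,j-1}$ and $\nu\in\Phi_{\mathcal{J}}$, minimality forces $\alpha\preceq\nu$ whenever $\alpha,\nu$ are comparable, whence $\theta=\gamma-\alpha\succeq\mu$ and the ideal property yields $\theta\in\Phi_{\mathcal{J}}^{\,j-1}$. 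The delicate point, which I expect to require a short case analysis on the rank-two subsystems in the spirit of Shi's and Cellini--Papi's arguments, is to exclude the configuration in which $\alpha$ is incomparable to both summands, using that $\theta$ is a single root. Granting the lemma, (C2) holds and $\delta-\alpha\in ND_R(w_{\mathcal{J}})$, which finishes the equivalence and hence the proposition.
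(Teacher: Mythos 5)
First, note that the paper itself offers no proof of this statement: it is imported verbatim from Cellini--Papi \cite[Proposition 3.4]{CePa04}, so your attempt must be judged as a self-contained argument. Much of it is in good shape. The biconvexity test for descent roots is valid in the affine setting, condition (C1) is indeed vacuous for level-one roots, and your forward direction is complete and correct: the saturated-chain argument produces $\gamma\in\Phi_{\mathcal{J}}$ with $\eta:=\alpha-\gamma$ simple, and then $\eta\in\Phi^+\setminus N(w_{\mathcal{J}})$ while $(\delta-\alpha)+\eta=\delta-\gamma\in N(w_{\mathcal{J}})$, which kills co-closure. This establishes the inclusion $ND_R(w_{\mathcal{J}})\cap\{\delta-\alpha\mid\alpha\in\Phi_0^+\}\subseteq\{\delta-\alpha\mid\alpha\in(\Phi_{\mathcal{J}})_{\min}\}$.

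The reverse inclusion, however, is not proved: it rests entirely on the ``cancellation lemma'' (if $\alpha\in(\Phi_{\mathcal{J}})_{\min}$, $\theta\in\Phi_0^+$ and $\alpha+\theta\in\Phi_{\mathcal{J}}^{j}$, then $\theta\in\Phi_{\mathcal{J}}^{j-1}$), which you state, correctly identify as the main obstacle, and only sketch. The sketch leans on two unestablished claims: (a) that each bracket power $\Phi_{\mathcal{J}}^{k}$ is again a root ideal --- this is not immediate from the definition $\Psi^{k}=[\Psi^{k-1},\Psi]$, because for $\gamma=\mu+\nu\in\Psi^{k}$ and $\tau\in\Phi_0^+$ with $\gamma+\tau\in\Phi_0^+$, neither $\mu+\tau$ nor $\nu+\tau$ need be a root, and ruling out the bad configuration by inner-product arguments works in the simply-laced case but genuinely requires rank-two (length-ratio) analysis in types $B$, $C$, $F_4$, $G_2$; and (b) the exclusion of decompositions $\gamma=\mu+\nu$ in which $\alpha$ is comparable to neither summand, which you explicitly leave open. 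These two points are exactly the technical core of Cellini--Papi's machinery (the closure and co-closure properties behind Theorem \ref{thm:CePa} and \cite[Proposition 3.4]{CePa04}). As written, your proposal is a correct and clean reduction of the proposition to that lemma, but not a proof of the proposition.
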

  
    \medskip

  \begin{prop}[Cellini-Papi]\label{prop CP}
    Let $\mathcal{J}$ be an ad-nilpotent of $\mathfrak{b}$.  We have 
  $$
  ND_R(w_{\mathcal{J}}) = \{\delta - \alpha~|~\alpha \in (\Phi_{\mathcal{J}})_{\min}\}.
  $$
  \end{prop}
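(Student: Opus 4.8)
The plan is to derive Proposition \ref{prop CP} directly by combining the two immediately preceding propositions, which between them pin down $ND_R(w_{\mathcal{J}})$ both from above (a containment) and on the relevant slice (an equality). First I would record the trivial set-theoretic observation that for any subset $S \subseteq T$ of a set, one has $S = S \cap T$; I will apply this with $S = ND_R(w_{\mathcal{J}})$ and $T = \{\delta - \alpha \mid \alpha \in \Phi_0^+\}$.

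The containment half is supplied verbatim by Proposition \ref{PropCP1}, which asserts $ND_R(w_{\mathcal{J}}) \subseteq \{\delta - \alpha \mid \alpha \in \Phi_0^+\}$. Hence, writing $T := \{\delta - \alpha \mid \alpha \in \Phi_0^+\}$, we may replace $ND_R(w_{\mathcal{J}})$ by its intersection with $T$ without loss:
$$
ND_R(w_{\mathcal{J}}) = ND_R(w_{\mathcal{J}}) \cap T.
$$
Now the equality half is exactly Proposition \ref{PropCP2}, which computes this very intersection as $\{\delta - \alpha \mid \alpha \in (\Phi_{\mathcal{J}})_{\min}\}$. Chaining the two displays yields
$$
ND_R(w_{\mathcal{J}}) = ND_R(w_{\mathcal{J}}) \cap T = \{\delta - \alpha \mid \alpha \in (\Phi_{\mathcal{J}})_{\min}\},
$$
which is the desired statement.

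In truth there is no real mathematical obstacle here: the proposition is by design the logical conjunction of Propositions \ref{PropCP1} and \ref{PropCP2}, and the only content of the argument is the elementary fact that an equality restricted to a superset of the left-hand side, together with a containment into that superset, forces the full equality. The one point that deserves a sentence of care is the appeal to the hypothesis of Theorem \ref{thm:CePa}, which guarantees that $w_{\mathcal{J}}$ is a well-defined element of $W$ (so that $ND_R(w_{\mathcal{J}})$ and its descent data make sense); once that is in place, the two cited propositions apply to the same $w_{\mathcal{J}}$ and the conclusion is immediate. I would therefore present this as a short two-line proof rather than an extended argument, since the real mathematical work lives in the proofs of the two constituent propositions in \cite{CePa00} and \cite{CePa04}.
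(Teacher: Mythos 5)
Your proof is correct and matches the paper exactly: the paper states that Propositions \ref{PropCP1} and \ref{PropCP2} ``sum up'' to Proposition \ref{prop CP}, which is precisely your argument of using the containment $ND_R(w_{\mathcal{J}}) \subseteq \{\delta - \alpha~|~\alpha \in \Phi_0^+\}$ to replace $ND_R(w_{\mathcal{J}})$ by its intersection with that set and then invoking the computed value of that intersection. Nothing further is needed.
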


  \bigskip
  
  \section{Dominant minimal elements are low}

This section is dedicated to show that the notion of minimal elements of dominant Shi regions is the same notion as that of low elements in dominant Shi regions (see Proposition \ref{equalities dom}).
  We denote by $L^0_\shi$ the set of minimal elements in dominant Shi regions of $W$,  $L^0$ the set of low elements in dominant Shi regions of $W$ and $\mathcal{I}_{\text{ad}}$ the set of ad-nilpotent ideals of $\mathfrak{b}$.

  \bigskip

\begin{prop} \label{prop dominant min element}
 Let $\mathcal{J} \in \mathcal{I}_{\text{ad}}$.  We have  $N(w_{\mathcal{J}}) =  \cone_{\Phi}(\{\delta-\alpha~|~\alpha \in \Phi_{\mathcal{J}}\})$.  In particular $w_{\mathcal{J}}$ is a low element of $W$.  
\end{prop}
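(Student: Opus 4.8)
The plan is to prove the identity $N(w_{\mathcal{J}}) = \cone_{\Phi}(\{\delta-\alpha \mid \alpha \in \Phi_{\mathcal{J}}\})$ and then deduce lowness. By Theorem \ref{thm:CePa} we know $N(w_{\mathcal{J}}) = \mathbb{L}_{\mathcal{J}} = \bigcup_{k\in\mathbb{N}^*}(k\delta - \Phi_{\mathcal{J}}^k)$, so the task reduces to identifying this union with the cone generated by the roots $\delta - \alpha$ for $\alpha \in \Phi_{\mathcal{J}}$. First I would record that each generator $\delta - \alpha$ with $\alpha \in \Phi_{\mathcal{J}}$ does lie in $N(w_{\mathcal{J}})$: this is the $k=1$ layer of $\mathbb{L}_{\mathcal{J}}$, since $\Phi_{\mathcal{J}}^1 = \Phi_{\mathcal{J}}$. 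Hence the generating set is contained in $N(w_{\mathcal{J}})$, and since inversion sets of any element of a Coxeter group are always closed under the $\cone_\Phi$ operation restricted to their own elements (a standard biconvexity fact), one containment $\cone_{\Phi}(\{\delta-\alpha \mid \alpha \in \Phi_{\mathcal{J}}\}) \subseteq N(w_{\mathcal{J}})$ is essentially formal.

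For the reverse containment I would analyse the higher layers $k\delta - \Phi_{\mathcal{J}}^k$ directly. The key combinatorial observation is the meaning of the bracket: by definition $\Phi_{\mathcal{J}}^k = [\Phi_{\mathcal{J}}^{k-1}, \Phi_{\mathcal{J}}]$ consists of sums $\gamma = \alpha_1 + \cdots + \alpha_k$ with each $\alpha_i \in \Phi_{\mathcal{J}}$ and all partial sums lying in $\Phi_0^+$. For such a $\gamma$, the element $k\delta - \gamma$ equals $(\delta - \alpha_1) + \cdots + (\delta - \alpha_k)$, which is visibly a nonnegative integer combination of the generators $\delta - \alpha_i \in \{\delta - \alpha \mid \alpha \in \Phi_{\mathcal{J}}\}$. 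Thus every element of $k\delta - \Phi_{\mathcal{J}}^k$ is a sum of $k$ generators, and provided it is a root of $\Phi$ (which it is, being an element of $N(w_{\mathcal{J}})$), it lies in $\cone_{\Phi}$ of the generating set. This shows $\mathbb{L}_{\mathcal{J}} \subseteq \cone_{\Phi}(\{\delta-\alpha \mid \alpha \in \Phi_{\mathcal{J}}\})$ and completes the equality.

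The main obstacle I anticipate is the bookkeeping in the reverse direction: I must verify that the decomposition $\gamma = \alpha_1 + \cdots + \alpha_k$ afforded by the iterated bracket actually yields all partial sums in $\Phi_0^+$, so that the corresponding partial sums $j\delta - (\alpha_1+\cdots+\alpha_j)$ are genuine positive roots of $\Phi$ and the combination stays inside $\cone_\Phi$ rather than merely inside the abstract cone. This requires unwinding the recursive definition of $\Phi_{\mathcal{J}}^k$ carefully and using that $\Phi_{\mathcal{J}}$ is a root ideal of $(\Phi_0^+, \preceq)$, so that the relevant sums do not fall out of $\Phi_0^+$. Once the set equality is established, the conclusion that $w_{\mathcal{J}}$ is low is immediate from the convenient characterization quoted in the introduction: taking $X = \{\delta - \alpha \mid \alpha \in \Phi_{\mathcal{J}}\}$, it suffices to note $X \subseteq \Sigma$, i.e.\ that each $\delta - \alpha$ with $\alpha \in \Phi_{\mathcal{J}}$ is a small root, after which $N(w_{\mathcal{J}}) = \cone_\Phi(X)$ exhibits $w_{\mathcal{J}}$ as low by definition.
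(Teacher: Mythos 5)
Your proposal is correct and follows essentially the same route as the paper: invoke Theorem \ref{thm:CePa} to reduce the claim to $\mathbb{L}_{\mathcal{J}} = \cone_{\Phi}(\{\delta-\alpha~|~\alpha \in \Phi_{\mathcal{J}}\})$, unwind the iterated bracket to write each element $k\delta - \gamma$ of $k\delta - \Phi_{\mathcal{J}}^k$ as $(\delta-\alpha_1)+\cdots+(\delta-\alpha_k)$ with $\alpha_i \in \Phi_{\mathcal{J}}$, and deduce lowness from $\{\delta-\alpha~|~\alpha \in \Phi_{\mathcal{J}}\} \subseteq \Sigma$ via the stated characterization of low elements. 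The only departures are minor: you settle the inclusion $\cone_{\Phi}(\{\delta-\alpha~|~\alpha \in \Phi_{\mathcal{J}}\}) \subseteq N(w_{\mathcal{J}})$ by the closure (biconvexity) property of inversion sets, where the paper simply leaves this direction to the reader, and your anticipated obstacle about partial sums is not actually needed, since $\cone_{\Phi}(X) = \cone(X)\cap\Phi$ only requires the total sum to be a root.
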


  \begin{proof}
  By Theorem \ref{thm:CePa} we now there exists a unique element $w_{\mathcal{J}} \in W$ such that $\mathbb{L}_{\mathcal{J}}= N(w_{\mathcal{J}})$.  Thus, we need to show that $\mathbb{L}_{\mathcal{J}}= \cone_{\Phi}(\{\delta-\alpha~|~\alpha \in \Phi_{\mathcal{J}}\})$.  Let $x \in \mathbb{L}_{\mathcal{J}}$, then $x = k\delta - \beta$ with $k \in \mathbb{N}^*$ and $\beta \in \Phi_{\mathcal{J}}^k$.  Therefore, there exist $\beta_1,\dots,\beta_k \in \Phi_{\mathcal{J}}$ such that $\beta = \beta_1 + \dots \beta_k$.  It follows then that $x = (\delta-\beta_1) + \dots + (\delta-\beta_k)$, which means that $x \in \cone_{\Phi}(\{\delta-\alpha~|~\alpha \in \Phi_{\mathcal{J}}\})$. The reverse inclusion is almost the same and is left to the reader.
  
  But now,  we know that $w_{\mathcal{J}}$ is low if and only if there exists a subset $X \subset \Sigma$ such that $\text{cone}_{\Phi}(X) = N(w_{\mathcal{J}})$.  Since $\Sigma = \{\alpha, \delta-\alpha~|~\alpha \in \Phi_0^+\}$ and since $\Phi_{\mathcal{J}} \subseteq \Phi_0^+$,  by setting $Y := \{\delta-\alpha~|~\alpha \in \Phi_{\mathcal{J}}\}$ we do have $\text{cone}_{\Phi}(Y) = N(w_{\mathcal{J}})$ with $Y \subset \Sigma$,  implying then that $w_{\mathcal{J}}$ is low.
  \end{proof}
  
  \bigskip
  
  \begin{prop}\label{prop min papi dominant}
  Let $\mathcal{J}  \in \mathcal{I}_{\text{ad}}$ and let $\mathcal{R}_{\mathcal{J}}:=\mathcal{R}_{(\Phi_{\mathcal{J}})_{\min}}$.  The Shi region containing the alcove associated to $w_{\mathcal{J}}$ is dominant and is equal to $\mathcal{R}_{\mathcal{J}}$. Moreover,  if $\alpha \in \Phi_{\mathcal{J}}$ then $X(\mathcal R_{\mathcal{J}},\alpha) = +$ and if $\alpha \notin \Phi_{\mathcal{J}}$ then $X(\mathcal R_{\mathcal{J}},\alpha) = 0$.
  \end{prop}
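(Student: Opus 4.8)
The plan is to prove all three assertions simultaneously by computing the Shi sign type of the alcove $A_{w_{\mathcal{J}}}$ directly from its inversion set and matching it, coordinate by coordinate, with the sign type read off from Postnikov's region $\mathcal{R}_{\mathcal{J}}$. The organizing principle is that a Shi region is completely determined by its sign type: for each $\alpha\in\Phi_0^+$ the value $X(\mathcal R,\alpha)\in\{-,0,+\}$ records exactly on which of the three intervals cut by the two Shi hyperplanes $H_{\alpha,0}$ and $H_{\alpha,1}$ the region lies, so $\mathcal R\mapsto X(\mathcal R)$ is injective on Shi regions. It therefore suffices to show that $\zeta(w_{\mathcal{J}})$ and $X(\mathcal{R}_{\mathcal{J}})$ coincide, and both the dominance claim and the explicit sign-type description will fall out of this one computation.

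First I would record the standard dictionary between inversions and Shi coefficients. Translating ``$\beta\in\Phi^+$ is negative on $A_w$'' into the defining inequalities $k(w,\alpha)<\langle x,\alpha\rangle<k(w,\alpha)+1$ for $x\in A_w$, and using that $k\delta-\alpha$ corresponds to the hyperplane $H_{\alpha,k}$, one gets $k\delta-\alpha\in N(w)\iff k(w,\alpha)\geq k$ for $k\geq 1$, and $\alpha+k\delta\in N(w)\iff k(w,\alpha)\leq -k-1$ for $k\geq 0$. Now by Theorem \ref{thm:CePa} and the definition of $\mathbb{L}_{\mathcal{J}}$, every element of $N(w_{\mathcal{J}})=\mathbb{L}_{\mathcal{J}}=\bigcup_{k\geq 1}(k\delta-\Phi_{\mathcal{J}}^{k})$ has the form $k\delta-\beta$ with $k\geq 1$ and $\beta\in\Phi_0^+$; in particular $N(w_{\mathcal{J}})$ contains no root $\alpha+k\delta$ with $k\geq 0$. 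Taking $k=0$ gives $\alpha\notin N(w_{\mathcal{J}})$ for every $\alpha\in\Phi_0^+$, hence $k(w_{\mathcal{J}},\alpha)\geq 0$ and $X(w_{\mathcal{J}},\alpha)\in\{+,0\}$ for all $\alpha$. This already proves dominance, equivalently $A_{w_{\mathcal{J}}}\subseteq C_\circ$ via Proposition \ref{prop:DomCR}. Moreover $X(w_{\mathcal{J}},\alpha)=+$ iff $k(w_{\mathcal{J}},\alpha)\geq 1$ iff $\delta-\alpha\in N(w_{\mathcal{J}})$, and since a root with $\delta$-coefficient $1$ can only lie in the layer $\delta-\Phi_{\mathcal{J}}$, this holds exactly when $\alpha\in\Phi_{\mathcal{J}}$.

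It remains to identify this sign type with that of $\mathcal{R}_{\mathcal{J}}=\mathcal{R}_{(\Phi_{\mathcal{J}})_{\min}}$. Reading Postnikov's definition, for $x\in\mathcal{R}_A$ and $\beta\in\Phi_0^+$ one has $\langle\beta,x\rangle>1$ precisely when $\beta$ lies in the root ideal generated by the antichain $A$, and $0<\langle\beta,x\rangle<1$ otherwise; hence $X(\mathcal{R}_A,\beta)=+$ iff $\beta$ belongs to that root ideal, and $X(\mathcal{R}_A,\beta)=0$ otherwise. Taking $A=(\Phi_{\mathcal{J}})_{\min}$ and using that the root ideal generated by the set of minimal elements of $\Phi_{\mathcal{J}}$ is $\Phi_{\mathcal{J}}$ itself, I obtain $X(\mathcal{R}_{\mathcal{J}},\alpha)=+$ iff $\alpha\in\Phi_{\mathcal{J}}$ and $X(\mathcal{R}_{\mathcal{J}},\alpha)=0$ otherwise, which is the asserted formula and is exactly the sign type of $w_{\mathcal{J}}$ computed above. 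Since the sign type is a complete invariant of a Shi region, the Shi region containing $A_{w_{\mathcal{J}}}$ coincides with $\mathcal{R}_{\mathcal{J}}$. The main obstacle is the opening step: pinning down the inversion/coefficient dictionary with the exact sign and indexing conventions of this paper (notably $W=Q^\vee\rtimes W_0$ and the pairing of $k\delta-\alpha$ with $H_{\alpha,k}$), since an off-by-one or a sign slip there would scramble the entire identification; everything after that point is bookkeeping.
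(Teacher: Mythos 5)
Your proof is correct and takes essentially the same route as the paper's: both arguments derive the Shi coefficients of $w_{\mathcal{J}}$ from $N(w_{\mathcal{J}})=\mathbb{L}_{\mathcal{J}}$ (the paper via its cone description in Proposition~\ref{prop dominant min element}), obtain dominance from $N(w_{\mathcal{J}})\cap\Phi_0^+=\emptyset$ together with Proposition~\ref{prop:DomCR}, and rewrite $\mathcal{R}_{(\Phi_{\mathcal{J}})_{\min}}$ using that the root ideal generated by the minimal elements of $\Phi_{\mathcal{J}}$ is $\Phi_{\mathcal{J}}$ itself. The only cosmetic difference is the final packaging: you match the sign types $\zeta(w_{\mathcal{J}})$ and $X(\mathcal{R}_{\mathcal{J}})$ and invoke injectivity of the sign-type map on Shi regions, whereas the paper checks directly that every point of the alcove $w_{\mathcal{J}}\cdot A_\circ$ satisfies the defining inequalities of $\mathcal{R}_{\mathcal{J}}$ (and your $k(w_{\mathcal{J}},\beta)\geq 1$ is in fact the correct bound where the paper writes $k(w_{\mathcal{J}},\beta)>1$).
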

  
  \begin{proof}
 By Proposition~\ref{prop:DomCR} we know that the alcove associated to $w_{\mathcal{J}}$ is in the fundamental chamber if and only if $N(w_{\mathcal{J}}) \cap \Phi_0^+ = \emptyset$.  But this equality is satisfied by Proposition \ref{prop dominant min element}.  Thus the Shi region containing $w_{\mathcal{J}}$ is dominant.

 Let us show now that the Shi region that contains the alcove associated to $w_{\mathcal{J}}$ is $\mathcal{R}_{\mathcal{J}}$.  To do so, we only need to show that for any $x \in w_{\mathcal{J}} \cdot A_\circ$ we have $\langle \beta, x \rangle > 1$ if $\alpha \preceq \beta$ for some $\alpha \in (\Phi_{\mathcal{J}})_{\min}$ and $\langle \beta, x \rangle > 1$ otherwise.  
 
 It turns out that we can rewrite the two conditions that define $\mathcal{R}_{\mathcal{J}}$ more easily.  Indeed,  if $\beta \notin \Phi_{\mathcal{J}}$ then in particular, since $\Phi_{\mathcal{J}}$ is a root ideal, $\beta$ cannot be greater or equal than any $\alpha \in \Phi_{\mathcal{J}}$ and let alone than any $\alpha \in (\Phi_{\mathcal{J}})_{\min}$.  Moreover,  if $\beta \in \Phi_{\mathcal{J}}$ it is clear that $\beta$ is either greater than a minimal element of $\Phi_{\mathcal{J}}$ or is a minimal element of $\Phi_{\mathcal{J}}$, in both cases we have for any $x \in \mathcal{R}_{\mathcal{J}}$ that $\langle \beta, x \rangle > 1$.  Hence
 \begin{align}\label{RJ}
  \mathcal{R}_{\mathcal{J}} = \{x \in C_{\circ}~|~\langle\ \beta, x \rangle > 1~\text{if}~\beta \in \Phi_{\mathcal{J}}~\text{and}~\langle \beta, x \rangle < 1~\text{if}~\beta \notin \Phi_{\mathcal{J}} \}.
 \end{align}

Let $x \in w_{\mathcal{J}} \cdot A_\circ$. Since $N(w_{\mathcal{J}}) =  \cone_{\Phi}(\{\delta-\alpha~|~\alpha \in \Phi_{\mathcal{J}}\})$ by Proposition \ref{prop dominant min element}, we have $k(w_{\mathcal{J}}, \beta) > 1$ for any $\beta \in \Phi_{\mathcal{J}}$ and $0\leq k(w_{\mathcal{J}}, \beta) < 1$ for any $\beta \notin \Phi_{\mathcal{J}}$.  In particular it follows that $\langle \beta, x \rangle > 1$ for any $\beta \in \Phi_{\mathcal{J}}$ and  $\langle \beta, x \rangle < 1$ for any $\beta \notin \Phi_{\mathcal{J}}$.  Thus $x \in \mathcal{R}_{\mathcal{J}}$ by Eq.  (\ref{RJ}). 

 With respect to the last statement, it is a direct consequence of the fact that $\mathcal{R}_{\mathcal{J}}$ is dominant and Eq.  (\ref{RJ}). This ends the proof.
  \end{proof}

  \bigskip

\begin{ex}
Let $\mathcal{J}$ be the ad-nilpotent ideal in type $A_3$ given by $$\mathcal{J} = \mathfrak{g}_{23} \oplus \mathfrak{g}_{13} \oplus \mathfrak{g}_{24} \oplus \mathfrak{g}_{14}.$$  The corresponding root ideal is $\Phi_{\mathcal{J}} = \{e_{23}, e_{13}, e_{24},e_{14}\}$, where $e_{ij} = e_i - e_j \in \mathbb{R}^4$,  and the associated antichain is $(\Phi_{\mathcal{J}})_{\min} = \{e_{23}\}$.  Thus, the corresponding dominant Shi region (given in terms of its $\oplus$-sign type) and its minimal element are

\medskip

\begin{figure}[h!]
\begin{center}
\begin{tikzpicture} 
\node at  (-1.7,0.5) {$ X(\mathcal{R}_{\mathcal{J}})$} ;
\node at (-0.7,0.5) {$=$} ;
\node at (0,0) {$0$} ;
\node at (1,0) {$+$} ;
\node at (2,0) {$0$} ;

\node at (0.5,0.5 ) {$+$} ;
\node at (1.5, 0.5 ) {$+$} ;

\node at (1,1) {$+$} ;

\node at  (4,0.5) {$w_{\mathcal{J}}$} ;
\node at (4.8,0.5) {$=$} ;
\node at (5.5,0) {$0$} ;
\node at (6.5,0) {$1$} ;
\node at (7.5,0) {$0$} ;

\node at (6,0.5 ) {$1$} ;
\node at (7, 0.5 ) {$1$} ;

\node at (6.5,1) {$1$} ;

\node at (8.3,0.5) {$=$} ;
\node at (9.5,0.5) {$s_2s_3s_1s_2$} ;
\end{tikzpicture}
\end{center}
\end{figure}
\end{ex}

  \bigskip

  \begin{prop}\label{equalities dom}
We have the following equalities
$$\{w_{\mathcal{J}}~|~\mathcal{J} \in \mathcal{I}_{\text{ad}}\} = L^0 = L^0_{\shi}.$$
\end{prop}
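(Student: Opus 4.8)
The plan is to prove the two equalities in Proposition~\ref{equalities dom} by establishing a cycle of inclusions among the three sets
$$
A:=\{w_{\mathcal{J}}~|~\mathcal{J} \in \mathcal{I}_{\text{ad}}\},\qquad L^0_{\shi},\qquad L^0.
$$
First I would settle the inclusion $A\subseteq L^0$. By Proposition~\ref{prop dominant min element} every $w_{\mathcal{J}}$ is a low element, and by Proposition~\ref{prop min papi dominant} its alcove lies in the dominant Shi region $\mathcal{R}_{\mathcal{J}}$; hence $w_{\mathcal{J}}$ is a low element whose alcove sits in a dominant Shi region, so $w_{\mathcal{J}}\in L^0$. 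This gives $A\subseteq L^0$ essentially for free from the two preceding propositions.

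Next I would prove $A\subseteq L^0_{\shi}$, i.e. that each $w_{\mathcal{J}}$ is in fact the minimal element of its dominant Shi region $\mathcal{R}_{\mathcal{J}}$. The idea is to compare Shi coefficients: Shi's characterization says the minimal element $m_{\mathcal{R}_{\mathcal{J}}}$ is the unique alcove in $\mathcal{R}_{\mathcal{J}}$ minimizing the coefficients $(k(w,\alpha))_{\alpha\in\Phi_0^+}$. From $N(w_{\mathcal{J}})=\cone_{\Phi}(\{\delta-\alpha\mid\alpha\in\Phi_{\mathcal{J}}\})$ (Proposition~\ref{prop dominant min element}) I would read off that $k(w_{\mathcal{J}},\beta)=1$ for every minimal generator direction and, more importantly, that $w_{\mathcal{J}}$ realizes the smallest admissible coefficients compatible with the sign type $X(\mathcal{R}_{\mathcal{J}})$ computed in Proposition~\ref{prop min papi dominant}. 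Concretely, $\beta\in\Phi_{\mathcal{J}}$ forces $k\ge 1$ while $\beta\notin\Phi_{\mathcal{J}}$ forces $k=0$; since $w_{\mathcal{J}}$ attains the value $1$ on the former and $0$ on the latter, it is coefficient-minimal, hence equals $m_{\mathcal{R}_{\mathcal{J}}}$. This yields $w_{\mathcal{J}}\in L^0_{\shi}$ and, combined with the bijectivity of the correspondences recorded in Section~\ref{background ad-ideal} (ad-nilpotent ideals $\leftrightarrow$ root ideals $\leftrightarrow$ dominant Shi regions $\leftrightarrow$ dominant minimal elements), shows $A=L^0_{\shi}$: the map $\mathcal{J}\mapsto w_{\mathcal{J}}=m_{\mathcal{R}_{\mathcal{J}}}$ is a surjection onto the set of dominant minimal elements.

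To close the cycle I would prove $L^0\subseteq A$, which is the reverse direction and the place where I expect the real work to lie. Take $w\in L^0$, so $A_w\subseteq C_\circ$ and $N(w)=\cone_\Phi(X)$ for some $X\subseteq\Sigma$. Since $\Sigma=\{\alpha,\ \delta-\alpha\mid\alpha\in\Phi_0^+\}$ and $A_w\subseteq C_\circ$ forces $N(w)\cap\Phi_0^+=\emptyset$ (dominance rules out any finite positive root $\alpha\in\Phi_0^+$ as an inversion, by Proposition~\ref{prop:DomCR}), only roots of the form $\delta-\alpha$ can occur among the generators; so $N(w)=\cone_\Phi(\{\delta-\alpha\mid\alpha\in\Psi\})$ for some $\Psi\subseteq\Phi_0^+$. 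The crux is then to verify that $\Psi$ must be a \emph{root ideal} of $(\Phi_0^+,\preceq)$, and that the resulting $N(w)$ coincides with $\mathbb{L}_\Psi$; the root-ideal property is exactly what makes $\cone_\Phi(\{\delta-\alpha\mid\alpha\in\Psi\})$ closed under the additions forced by being an inversion set (if $\delta-\alpha$ and $\beta$ are both inversions with $(\delta-\alpha)+\beta$ a root, closure under $\cone_\Phi$ forces the corresponding finite root to stay in $\Psi$). Once $\Psi\in\mathcal{I}(\Phi_0)$ is identified, it corresponds under Cellini--Papi's bijection to an ad-nilpotent ideal $\mathcal{J}$ with $\Phi_{\mathcal{J}}=\Psi$, and the uniqueness in Theorem~\ref{thm:CePa} gives $w=w_{\mathcal{J}}\in A$.

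The main obstacle is this last inclusion $L^0\subseteq A$: proving that an arbitrary dominant low element has inversion set of the precise shape $\mathbb{L}_\Psi$ for a \emph{root ideal} $\Psi$, rather than an arbitrary subset of $\Phi_0^+$. The delicate point is showing the generating set $\Psi$ is forced to be upward-closed under $\preceq$ — this is where the interplay between the $\cone_\Phi$-closure defining lowness and the additive/bracket structure of $\mathbb{L}_\Psi$ must be reconciled, and it is the only step that genuinely uses more than the formal bijections already assembled in the survey section.
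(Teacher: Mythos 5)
Your overall architecture (the cycle $A\subseteq L^0$, $A=L^0_{\shi}$, $L^0\subseteq A$, with $A:=\{w_{\mathcal{J}}\mid \mathcal{J}\in\mathcal{I}_{\text{ad}}\}$) would indeed prove the proposition, and it is genuinely different from the paper's route: the paper never proves $L^0\subseteq A$ or the coefficient-minimality of $w_{\mathcal{J}}$ at all. Instead it cites \cite[Proposition 4.5]{chaphohl2022shi} ($L\subseteq L_{\shi}$) to get $L^0\subseteq L^0_{\shi}$, proves only the easy inclusion $A\subseteq L^0$ (exactly as you do, from Propositions \ref{prop dominant min element} and \ref{prop min papi dominant}), and then collapses the chain $A\subseteq L^0\subseteq L^0_{\shi}$ into equalities by a finiteness/counting argument: the bijections of Section \ref{background ad-ideal} (Figure \ref{synthesis bij}) give $|A|=|\mathcal{I}_{\text{ad}}|=\#\{\text{root ideals}\}=\#\{\text{dominant Shi regions}\}=|L^0_{\shi}|$. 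Both of the statements you plan to prove directly then fall out as corollaries. This is the essential point you missed: the hard direction can be bought for free with cardinality, provided one is willing to use the external result $L\subseteq L_{\shi}$.

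As written, your proposal has genuine gaps precisely in the two steps the paper avoids. In step 3, the mechanism you indicate for the root-ideal property is wrong: for a dominant $w$ we have $N(w)\cap\Phi_0^+=\emptyset$ (Proposition \ref{prop:DomCR}), so the premise ``$\delta-\alpha$ and $\beta$ are both inversions'' with $\beta\in\Phi_0^+$ is vacuous; moreover $(\delta-\alpha)+\beta=\delta-(\alpha-\beta)$ concerns $\alpha-\beta$, not $\alpha+\beta$, so even granting the premise it would not give upward closure. The correct argument uses \emph{coclosedness} of inversion sets together with dominance: setting $\Psi':=\{\gamma\in\Phi_0^+\mid \delta-\gamma\in N(w)\}$ (note you must saturate --- the generating set $X$ itself need not be upward closed), if $\alpha\in\Psi'$ and $\alpha+\beta\in\Phi_0^+$, write $\delta-\alpha=(\delta-\alpha-\beta)+\beta$; since $\beta\notin N(w)$ by dominance, coclosedness forces $\delta-\alpha-\beta\in N(w)$, i.e.\ $\alpha+\beta\in\Psi'$. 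You would then still have to show $N(w)=\mathbb{L}_{\Psi'}$ (so that Theorem \ref{thm:CePa} identifies $w=w_{\mathcal{J}}$), which requires proving that every root in $\cone_{\Phi}(\{\delta-\gamma\mid\gamma\in\Psi'\})$ is of the form $k\delta-\beta$ with $\beta\in(\Psi')^k$; this is nontrivial since cone coefficients are arbitrary nonnegative reals. In step 2, your claim that $w_{\mathcal{J}}$ ``attains the value $1$'' on $\Phi_{\mathcal{J}}$ is false for non-minimal roots of the ideal: if $\beta=\beta_1+\beta_2$ with $\beta_i\in\Phi_{\mathcal{J}}$, then $2\delta-\beta\in N(w_{\mathcal{J}})$, so $k(w_{\mathcal{J}},\beta)\geq 2$. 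Coefficient-minimality of $w_{\mathcal{J}}$ in its region is true (it is a result of Cellini--Papi), but proving it needs the inequality $k(w,\alpha+\beta)\geq k(w,\alpha)+k(w,\beta)$ for alcoves, which neither you nor the paper establishes --- and which the paper's counting argument renders unnecessary.
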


\begin{proof}
By \cite[Proposition 4.5]{chaphohl2022shi} we know that $L \subseteq L_{\shi}$.  It follows that $L^0 \subseteq L_{\shi}^0$.  By Proposition \ref{prop dominant min element} we have $\{w_{\mathcal{J}}~|~\mathcal{J} \in \mathcal{I}_{\text{ad}}\} \subseteq L$.  Moreover, by Proposition \ref{prop min papi dominant} we also know that for any ad-nilpotent ideal $\mathcal{J}$, the element $w_{\mathcal{J}}$ is in a dominant Shi region.  This implies that $\{w_{\mathcal{J}}~|~\mathcal{J} \in \mathcal{I}_{\text{ad}}\} \subseteq L^0$. 
On the other hand,  we know by using all the previous bijections that $|L^0_{\shi}| = | \{w_{\mathcal{J}}~|~\mathcal{J} \in \mathcal{I}_{\text{ad}}\}|$ (see Figure \ref{synthesis bij}).  The result follows. 
\end{proof}

  \bigskip

\begin{prop}\label{cor:LLshiDom}
 Let $\mathcal{J}  \in \mathcal{I}_{\text{ad}}$. 
 \begin{itemize}
 \item[$(i)$] The set $(\Phi_{\mathcal{J}})_{\min}$ is the set of roots $\alpha$  such that the sign type $X(\mathcal R_{\mathcal{J}})$ can be changed into a new admissible sign type $X(\mathcal R')$ (for another Shi region $\mathcal{R}'$) defined by $X(\mathcal R', \beta) = X(\mathcal R_{\mathcal{J}},\beta)$ if $\beta \neq \alpha$ and $X(\mathcal R', \alpha)=0$.
 \item[$(ii)$]   $\Sigma D(\mathcal R_{\mathcal{J}}) = \{\delta - \alpha~|~\alpha \in (\Phi_{\mathcal{J}})_{\min}\}$.
 \item[$(iii)$]  $\Sigma D(\mathcal R_{\mathcal{J}})= ND_R(w_{\mathcal{J}})$. 
 \end{itemize}
 \end{prop}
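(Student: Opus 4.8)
The plan is to prove the three parts in order: $(ii)$ will build on $(i)$, and $(iii)$ is then an immediate consequence of $(ii)$ together with Proposition~\ref{prop CP}. The whole argument converts the geometric statements about walls into the single combinatorial statement that $\Phi_{\mathcal J} \setminus \{\alpha\}$ is a root ideal exactly when $\alpha$ is minimal in $\Phi_{\mathcal J}$.

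For $(i)$ I would argue purely combinatorially through Shi's bijection $\Psi \mapsto X_\Psi$ between root ideals of $(\Phi_0^+,\preceq)$ and $\oplus$-sign types (the dominant admissible sign types) recalled in Section~\ref{background ad-ideal}. By Proposition~\ref{prop min papi dominant} the sign type $X(\mathcal R_{\mathcal J})$ is exactly $X_{\Phi_{\mathcal J}}$: it reads $+$ on $\Phi_{\mathcal J}$ and $0$ elsewhere. Since all its entries lie in $\{+,0\}$, any tuple obtained by flipping a single entry from $+$ to $0$ is again dominant, hence admissible if and only if its support is a root ideal. Flipping the entry at $\alpha$ yields a genuinely new sign type only when $\alpha \in \Phi_{\mathcal J}$ (otherwise the entry is already $0$), in which case the new support is $\Phi_{\mathcal J} \setminus \{\alpha\}$. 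It then remains to observe that $\Phi_{\mathcal J} \setminus \{\alpha\}$ is a root ideal precisely when $\alpha$ is minimal in $\Phi_{\mathcal J}$: if $\alpha$ is not minimal there is some $\beta \in \Phi_{\mathcal J}$ with $\beta \prec \alpha$, so removing $\alpha$ breaks upward-closure, while removing a minimal element of an upward-closed set keeps it upward-closed. This is exactly $(i)$.

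For $(ii)$ I would pass from small roots to affine hyperplanes, using that under the setup of Section~\ref{aff} the small root $\alpha$ corresponds to $H_{\alpha,0}$ and $\delta - \alpha$ to $H_{\alpha,1}$. Two observations reduce $(ii)$ to $(i)$. First, no $H_\alpha$ (that is, $H_{\alpha,0}$) is a descent-wall of $\mathcal R_{\mathcal J}$: both $\mathcal R_{\mathcal J}$ and the fundamental chamber $C$ lie in $C_\circ$, hence on the side $\langle x,\alpha\rangle>0$, so $H_{\alpha,0}$ cannot separate them; thus every descent-wall has the form $H_{\delta-\alpha}$. Second, crossing $H_{\delta-\alpha}=H_{\alpha,1}$ out of $\mathcal R_{\mathcal J}$ changes only the comparison of $\langle x,\alpha\rangle$ with $1$ and leaves all other coordinates fixed, so the region on the far side has precisely the sign type obtained by flipping the $\alpha$-entry of $X(\mathcal R_{\mathcal J})$. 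Hence $H_{\delta-\alpha}$ supports a wall of $\mathcal R_{\mathcal J}$ iff that flip is admissible, i.e. by $(i)$ iff $\alpha \in (\Phi_{\mathcal J})_{\min}$; and any such wall is automatically a descent-wall, since $\mathcal R_{\mathcal J}$ lies on the side $\langle x,\alpha\rangle>1$ while $C$ lies on the side $\langle x,\alpha\rangle<1$. Together these give $\Sigma D(\mathcal R_{\mathcal J}) = \{\delta-\alpha \mid \alpha \in (\Phi_{\mathcal J})_{\min}\}$.

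Finally, $(iii)$ is immediate: Proposition~\ref{prop CP} gives $ND_R(w_{\mathcal J}) = \{\delta-\alpha \mid \alpha \in (\Phi_{\mathcal J})_{\min}\}$, which by $(ii)$ equals $\Sigma D(\mathcal R_{\mathcal J})$. I expect the main obstacle to be the geometric step in $(ii)$, namely making rigorous that a single Shi hyperplane supports a facet of the region exactly when the corresponding one-entry flip is admissible, and that no $H_{\alpha,0}$ ever contributes a descent-wall; everything else is bookkeeping through Shi's bijection in $(i)$ or a direct substitution in $(iii)$.
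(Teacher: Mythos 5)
Your proposal is correct and follows the same overall route as the paper: part $(i)$ is the combinatorial heart, part $(ii)$ reduces descent-walls of the dominant region to single-entry sign flips across $H_{\alpha,1}$, and part $(iii)$ is exactly Proposition~\ref{prop CP} combined with $(ii)$. The one genuine divergence is in $(i)$: the paper verifies admissibility of the flipped sign type directly through Shi's rank-two criterion \cite[Proposition~1.2]{Shi97} --- supposing the flip at a minimal $\alpha$ fails produces a decomposition $\alpha=\beta_1+\beta_2$ inside a rank-two subsystem with $\beta_1\in\Phi_{\mathcal J}$, contradicting minimality --- whereas you invoke Shi's bijection between root ideals and $\oplus$-sign types as a black box and reduce $(i)$ to the order-theoretic observation that $\Phi_{\mathcal J}\setminus\{\alpha\}$ is again a root ideal if and only if $\alpha\in(\Phi_{\mathcal J})_{\min}$. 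Your version buys a cleaner, case-free argument that treats both inclusions symmetrically (the paper dispatches the reverse inclusion as \say{similar}), at the price of using the full strength of the bijection, in particular its surjectivity onto dominant admissible sign types, which is itself established via the rank-two criterion; the paper's argument is more self-contained at that point. In $(ii)$ you are in fact slightly more explicit than the paper, spelling out both why no $H_{\alpha,0}$ can be a descent-wall of a dominant region (the paper asserts $\Sigma D(\mathcal R_{\mathcal J})\subseteq\{\delta-\alpha~|~\alpha\in\Phi_0^+\}$ from dominance without comment) and why a wall on $H_{\alpha,1}$ with $X(\mathcal R_{\mathcal J},\alpha)=+$ is automatically a descent-wall. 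The single step both treatments leave informal is the same one you flag yourself: that an admissible one-entry flip corresponds to a region genuinely adjacent to $\mathcal R_{\mathcal J}$ along a facet contained in $H_{\alpha,1}$; a short convexity argument on the defining half-spaces settles this, so your proof has no gap beyond the paper's own level of rigor.
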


\begin{proof}
$(i)$ First of all, we know by Proposition \ref{prop min papi dominant} that for any $\alpha \in \Phi_{\mathcal{J}}$ we have $X(\mathcal{R}_{\mathcal{J}},  \alpha) = +$.  Let $\alpha \in (\Phi_{\mathcal{J}})_{\min}$ such that we cannot change $X(\mathcal R_{\mathcal{J}},\alpha) = +$ into $X(\mathcal R',\alpha) =0$.  This means then that there exists an irreducible subroot system of rank 2 of $\Phi_0^+$, say $\Psi$, such that $\alpha \in \Psi$ and $\alpha = \beta_1 + \beta_2$ with $\beta_1, \beta_2 \in  \Psi$ and at least one $\beta_i$, say $\beta_1$,  such that  $X(\mathcal R_{\mathcal{J}},\beta_1) = +$.  But then it follows that $\beta_1 \in \Phi_J$, contradicting the minimality of $\alpha$.   Hence the first inclusion. The reverse inclusion is similar. 

$(ii)$ First of all, since $\mathcal R_{\mathcal{J}}$ is a dominant Shi region by Proposition \ref{prop min papi dominant},  it follows that $\Sigma D(\mathcal R_{\mathcal{J}}) \subset \{\delta - \alpha~|~\alpha \in \Phi_0^+\}$.  Now, by Definition $\Sigma D(\mathcal R_{\mathcal{J}})$ is the set of roots associated to the descent-walls of $\mathcal R_{\mathcal{J}}$.  In other words,  $\Sigma D(\mathcal R_{\mathcal{J}})$ is the set of roots $\delta - \alpha$ such that the corresponding hyperplanes $H_{\alpha,1}$ are adjacent to the Shi region $\mathcal R_{\mathcal{J}}$ and to another one, say $\mathcal{R}'$.  But then it follows that the only difference between the sign types $X(\mathcal R_{\mathcal{J}})$ and  $X(\mathcal R')$ is over the position $\alpha$ with   $X(\mathcal R_{\mathcal{J}},\alpha) = +$ and $X(\mathcal R',\alpha) = 0$.  The equality follows then by the first point $(i)$.

$(iii)$ By Proposition \ref{prop CP} we know that $ND_R(w_{\mathcal{J}}) = \{\delta - \alpha~|~\alpha \in (\Phi_{\mathcal{J}})_{\min}\}$. The result follows then by the second point $(ii)$.  
\end{proof}
 
\medskip
 
 \begin{ex}
 Let $\mathcal{J}$ be the ad-nilpotent ideal with corresponding sign type given in Figure \ref{example min antichain}.  The set $\Phi_{\mathcal{J}}$ is the of roots $\alpha$ where $X(\mathcal{R}_{\mathcal{J}},\alpha) = +$, and the corresponding antichain is given by $(\Phi_{\mathcal{J}})_{\text{min}} = \{e_{23}, e_{35}, e_{46}\}$ (the red positions below).  We see that if change a red $+$ into a $0$ then the new sign type is again admissible, whereas if we change a black $+$ into a $0$ then we lose the admissibility.
  \end{ex}  
 \begin{figure}[h!]
\begin{center}
\begin{tikzpicture} 
\node at (0,0) {$0$} ;
\node at (1,0) {$\textcolor{red}{+}$} ;
\node at (2,0) {$0$} ;
\node at (3,0) {$0$} ;
\node at (4,0) {$0$} ;
\node at (5,0) {$0$} ;

\node at (0.5,0.5 ) {$+$} ;
\node at (1.5, 0.5 ) {$+$} ;
\node at (2.5, 0.5) {$\textcolor{red}{+}$} ;
\node at (3.5, 0.5) {$\textcolor{red}{+}$} ;
\node at (4.5, 0.5) {$0$} ;

\node at (1,1) {$+$} ;
\node at (2,1) {$+$} ;
\node at (3,1) {$+$} ;
\node at (4,1) {$+$} ;

\node at (1.5,1.5 ) {$+$} ;
\node at (2.5,1.5 ) {$+$} ;
\node at (3.5,1.5 ) {$+$} ;

\node at (2,2) {$+$};
\node at (3,2) {$+$};

\node at (2.5,2.5) {$+$};
\end{tikzpicture}
\end{center}
\caption{The $\oplus$-sign type of a dominant Shi region in the affine Weyl group of type $A_6$.  The reader may read \cite{chaphohl2022shi, Shi86, Shi87, Shi88} for a more detailed explanation of the above figure.}
\label{example min antichain}
\end{figure}
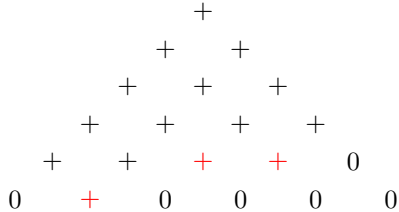

\begin{center}
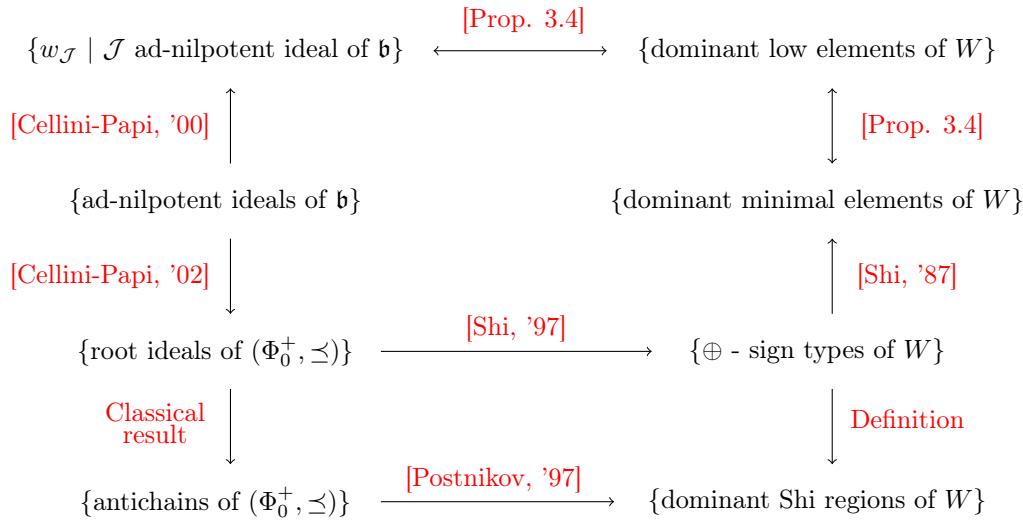
\begin{figure}[h!]
\begin{tikzpicture} 
\node at (0, 0) {$\{\text{root ideals of}~ (\Phi_0^+,\preceq)\}$} ;
\node at (8, 0) {$\{ \oplus\text{~-~sign types of}~W\}$} ; 
\node at (0, -2) {$\{\text{antichains of}~ (\Phi_0^+,\preceq)\}$} ;
\node at (8, -2) {$\{ \text{dominant Shi regions of}~W\}$} ; 
\node at (0, 2) {\{$\text{ad-nilpotent ideals of}~ \mathfrak{b}\}$} ;
\node at (8, 2) {\{$\text{dominant minimal elements of}~W\}$} ;
\node at (8, 4) {\{$\text{dominant low elements of}~W\}$} ;
\node at (0, 4) {$\{ w_{\mathcal{J}}~|~\mathcal{J}~\text{ad-nilpotent ideal of}~ \mathfrak{b}\}$} ;

\node at (4, 0.3) {\textcolor{red}{[Shi,  '97]}} ;
\node at (3.7, -1.7) {\textcolor{red}{[Postnikov,  '97]}} ;
\node at (9.2, -0.9) {\textcolor{red}{Definition}} ;
\node at (-0.8, -0.8) {\textcolor{red}{Classical}} ;
\node at (-0.8, -1.1) {\textcolor{red}{result}} ;
\node at (-1.4, 1) {\textcolor{red}{[Cellini-Papi, '02]}} ;
\node at (9.2, 1) {\textcolor{red}{[Shi,  '87]}} ;
\node at (-1.4, 3) {\textcolor{red}{[Cellini-Papi, '00]}} ;
\node at (9.4, 3) {\textcolor{red}{[Prop.  \ref{equalities dom}]}} ;
\node at (4.1, 4.4) {\textcolor{red}{[Prop.  \ref{equalities dom}]}} ;

\draw [->] (0.2,-0.5) -- (0.2,-1.5) ;
\draw [->] (8.2,-0.5) -- (8.2,-1.5) ;
\draw [->] (0.2,1.5) -- (0.2,0.5) ;
\draw [<-] (8.2,1.5) -- (8.2,0.5) ;
\draw [->] (0.2,2.5) -- (0.2,3.5) ;
\draw [<->] (8.2,2.5) -- (8.2,3.5) ;

\draw [->] (2.2,0) -- (5.8,0) ;
\draw [->] (2.2,-2) -- (5.3,-2) ;
\draw [<->] (2.9,4) -- (5.2,4) ;

\end{tikzpicture}
\caption{The bijections involved in the study of the dominant Shi regions.}
\label{synthesis bij}
\end{figure}
\end{center}

\bigskip
\vspace{2cm}
\bigskip

\bigskip
\vspace{2cm}
\bigskip

\bibliographystyle{plain}
\bibliography{note.bib}

\begin{thebibliography}{10}

\bibitem{At96}
Christos~A Athanasiadis.
\newblock Characteristic polynomials of subspace arrangements and finite
  fields.
\newblock {\em Advances in mathematics}, 122(2):193--233, 1996.

\bibitem{At98}
Christos~A Athanasiadis.
\newblock On noncrossing and nonnesting partitions for classical reflection
  groups.
\newblock {\em The electronic journal of combinatorics}, 5(1):R42, 1998.

\bibitem{BrHo93}
Brigitte Brink and Robert~B Howlett.
\newblock A finiteness property an an automatic structure for {C}oxeter groups.
\newblock {\em Mathematische Annalen}, 296(1):179--190, 1993.

\bibitem{CePa00}
Paola Cellini and Paolo Papi.
\newblock ad-nilpotent ideals of a {B}orel subalgebra.
\newblock {\em Journal of algebra}, 225(1):130--141, 2000.

\bibitem{CePa02}
Paola Cellini and Paolo Papi.
\newblock ad-nilpotent ideals of a {B}orel subalgebra {II}.
\newblock {\em Journal of Algebra}, 258(1):112--121, 2002.

\bibitem{CePa04}
Paola Cellini and Paolo Papi.
\newblock Abelian ideals of {B}orel subalgebras and affine {W}eyl groups.
\newblock {\em Advances in Mathematics}, 187(2):320--361, 2004.

\bibitem{chaphohl2022shi}
Nathan Chapelier-Laget and Christophe Hohlweg.
\newblock Shi arrangements and low elements in affine {C}oxeter groups.
\newblock {\em To appear in {C}anadian {J}ournal of {M}athematics, arXiv
  preprint: 2201.06491}, 2022.

\bibitem{DyHo16}
Matthew Dyer and Christophe Hohlweg.
\newblock Small roots, low elements, and the weak order in {C}oxeter groups.
\newblock {\em Adv. Math.}, 301:739--784, 2016.

\bibitem{dyer2023shi}
Matthew Dyer, Christophe Hohlweg, Susanna Fishel, and Alice Mark.
\newblock Shi arrangements and low elements in {C}oxeter groups.
\newblock {\em arXiv preprint: 2303.16569}, 2023.

\bibitem{Ha94}
Mark~D Haiman.
\newblock Conjectures on the quotient ring by diagonal invariants.
\newblock {\em Journal of {A}lgebraic {C}ombinatorics}, 3(1):17--76, 1994.

\bibitem{Rei97}
Victor Reiner.
\newblock Non-crossing partitions for classical reflection groups.
\newblock {\em Discrete Mathematics}, 177(1-3):195--222, 1997.

\bibitem{Shi86}
Jian~Yi Shi.
\newblock {\em The Kazhdan-Lusztig Cells in Certain Affine {W}eyl Groups}.
\newblock Lecture Notes in Mathematics. Springer-Verlag, 1986.

\bibitem{Shi87}
Jian~Yi Shi.
\newblock Alcoves corresponding to an affine {W}eyl group.
\newblock {\em J. London Math. Soc. (2)}, 35(1):42--55, 1987.

\bibitem{Shi88}
Jian~Yi Shi.
\newblock Sign types corresponding to an affine {W}eyl group.
\newblock {\em J. London Math. Soc. (2)}, 35(1):56--74, 1987.

\bibitem{Shi97}
Jian-Yi Shi.
\newblock The number of $\oplus$-sign types.
\newblock {\em Quart. J. Math. Oxford}, 48:93--105, 1997.

\end{thebibliography}

\end{document}